\definecolor{orange}{RGB}{242, 77, 16}
\definecolor{terquoise}{RGB}{0, 255, 255}
\newtheorem{theorem}{Theorem}[section]
\newtheorem*{theorem*}{Theorem}
\newtheorem{lemma}[theorem]{Lemma}
\newtheorem{proposition}[theorem]{Proposition}
\newtheorem{corollary}[theorem]{Corollary}
\theoremstyle{definition}
\newtheorem*{problem*}{Problem}
\begin{document}

\title{Chip-Firing and Fractional Bases}

\author{Matvey Borodin \and Hannah Han \and Kaylee Ji \and Tanya Khovanova \and Alexander Peng \and David Sun \and Isabel Tu \and Jason Yang \and William Yang \and Kevin Zhang \and Kevin Zhao}
\date{}

\maketitle

\begin{abstract}
We study a particular chip-firing process on the one-dimensional lattice. At any time when there are at least $a+b$ chips at a vertex, $a$ chips fire to the left and $b$ chips fire to the right. We describe the final state of this process when we start with $n$ chips at the origin.
\end{abstract}

\section{Introduction}

A chip-firing game is a one-player game played on a graph. Chips are distributed on vertices of a graph. At each point, a vertex that has the number of chips that is at least its degree can `fire' by simultaneously sending a chip along each incident edge (see \cite{BLS}). The game was later expanded to directed graphs, where a move consists of selecting a vertex with at least as many chips as
its outdegree, and sending one chip along each outgoing edge to its neighbors (see \cite{BL}). 

We study a particular chip-firing game suggested by Prof.~James Propp. This chip-firing game is played on an integer line. We consider an integer line as a graph with integers as vertices, and neighboring integers are connected by edges. We position the line horizontally, so that the left and right neighbors are well-defined. At any point in the game each vertex has non-negative number of chips assigned to it.

Fixing positive integers $a$ and $b$, we now describe what we call an $a$-$b$ chip-firing game. If there are at least $a+b$ chips at a particular vertex, then the vertex `fires': $a$ chips are moved to the left neighbor and $b$ chips are moved to the right neighbor. This game can be considered as a game on a directed graph if we replace an edge connecting two consecutive integers with $a$ edges directed from the larger to the smaller integer and $b$ edges directed the opposite way. Without loss of generality we assume throughout the paper that $a \leq b$.

We start with preliminaries and examples in Section~\ref{sec:preliminaries}, also describing fractional bases as they are related to this game. In Section~\ref{sec:series} we represent a state in the game as a number and a Laurent polynomial. We discuss some properties of the state and divide the state into the left and the right part. The left part corresponds to the vertices with non-positive indices, and the right part to the vertices with positive integers.

In Section~\ref{sec:notcoprime} we reduce the case of not coprime $a$ and $b$ to the case of their coprime factors. In Section~\ref{sec:a-a} we describe the final state for the case when $a = b$.

In Section~\ref{sec:23firing} we consider an example of 2-3 firing which illustrates what happens for any $a$-$b$ firing. In the next two sections we do some preliminary work: In Section~\ref{sec:originfiring} we describe what happens to the right part when the origin fires once. In Section~\ref{sec:settlements} we define and describe settlements, that is, the sequence of intermediate states for the right part.

We give a description of the final state in the next three sections. In Section~\ref{sec:value} we prove that the right part of the final state when evaluated in base $\frac{b}{a}$ becomes a constant for large initial number of chips. In Section~\ref{sec:leftside} we prove that for large initial number of chips the transition of the final state from $n$ to $n+1$ chips can be described using base $\frac{b}{a}$. In Section~\ref{sec:rightside} we show how to use settlements to calculate the righ part of the final state of the game.

We finish the paper with an example of 1-$b$ firing in Section~\ref{sec:1bfiring}.

\section{Preliminaries and Examples}\label{sec:preliminaries}

Our graph is an infinite path indexed by integers. We visualize it as a horizontal line with integers increasing to the right.

We start with $n$ chips at the vertex marked 0. We call the starting vertex \textit{the origin}. Whenever a vertex $x$ has at least $a+b$ chips, it fires. Firing means sending $a$ chips to the left and $b$ chips to the right. We call our game an \textit{$a$-$b$ chip-firing game}.

In most chip-firing models one chip is sent along the edge. This chip-firing model can be represented in the standard way by replacing each edge with $a$ edges directed to the left and $b$ edges directed to the right. By reflecting the line with respect to its origin we can swap $a$ and $b$. Without loss of generality we assume throughout the paper that $a \leq b$.

Bj\"{o}rner, Lov\'{a}sz, and Shor \cite{BLS} showed that a chip-firing game is finite if the number of chips is less than the number of edges. Thus, our game is finite. The order in which vertices fire does not matter, which means that the game terminates at a unique state, which we call the \textit{final state}, (see \cite{BLS}). 

We represent each configuration with a string of integers, where we drop all zeros on the left and on the right and where we mark the origin with a dot to its right. We call this dot \textit{the radix}. For example, consider 1-2 chip-firing. If we start with seven chips at the origin, we get 22.12 as the final state, see Figure~\ref{fig:cf7}.

\begin{figure}[htp]
\centering
\includegraphics[scale=0.35]{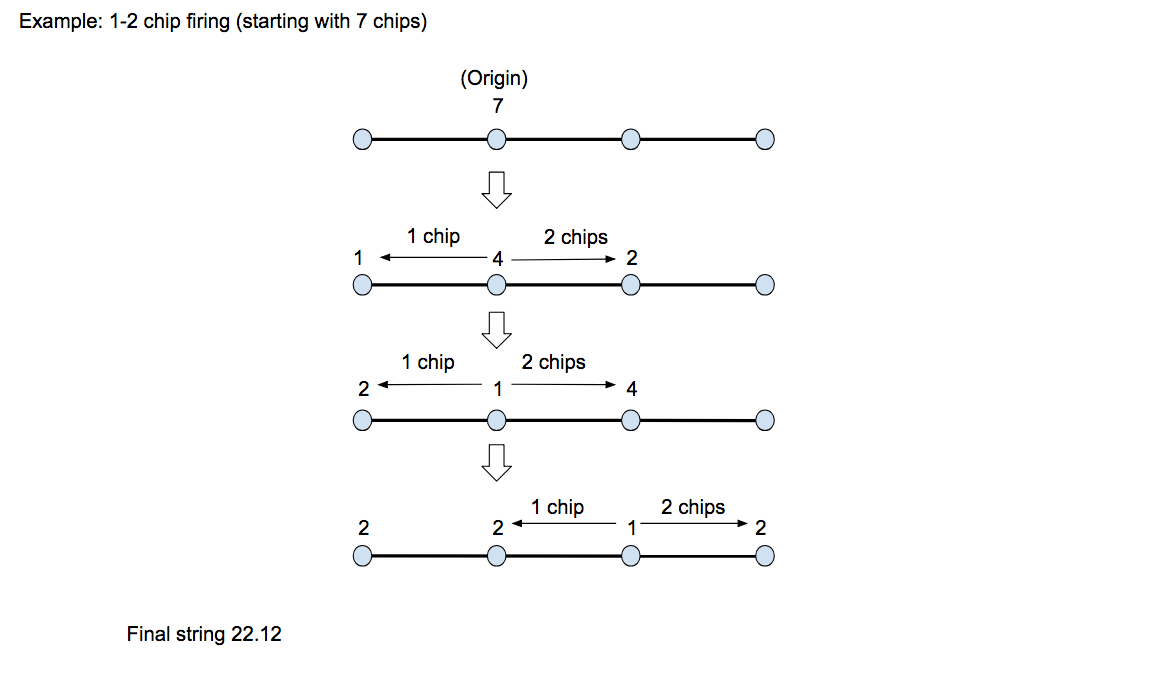}
\caption{1-2 Chip-firing starting with 7 chips}\label{fig:cf7}
\end{figure}

Given the final string representing an integer $n$, the final string for $n+1$ can be found by simply increasing the number of chips at the origin by 1, then firing if necessary. For example, the final state for $n=7$ in the 1-2 chip-firing game is 22.12. Thus, to find the final state for $n= 8$ in the 1-2 chip-firing game, we add 1 to 22.12 resulting in 23.12. Then, after the necessary firings the final state for $n= 8$ is 111.1112.

Our results will connect the left part of the final state to the fractional base $\frac{b}{a}$, where $ b > a$. These fractional bases were suggested by Propp \cite{JP} and described as a division algorithm in \cite{AFS}. This algorithm was also called \textit{exploding dots} and was popularized by Tanton \cite{JT}. 

We denote the base $\frac{b}{a}$ representation of $n$ as $(n)_\frac{b}{a}$. Conversely, given a string of digits $w$ in base $\frac{b}{a}$, its evaluation is denoted as $[w]_\frac{b}{a}$.

Here is a recursive description of the integer $n$ in such a base, where
\[(n)_\frac{b}{a} = d_kd_{k-1}\ldots d_2d_1d_0.\]
The last digit $d_0$ is the remainder of $n$ modulo $b$. The rest of the digits, $d_k d_{k-1}\cdots d_1$, is $\left(\frac{a(n-d_0)}{b}\right)_\frac{b}{a}$. The number $n$ can be recovered from its base $\frac{b}{a}$ representation as (see \cite{JP})
\[n = [d_k d_{k-1}\cdots d_0]_\frac{b}{a} = \sum_{i=0}^k d_i\frac{b^i}{a^i}.\]

Informally, we can explain this representation as follows. We start with $n$ in the units' place. If $n \geq b$ we subtract $b$ from $n$ and add $a$ to the place to the left. The same rule works independently of placement and continues until all the digits are below $b$. We can view this as an exploding dots process, where chips are dots, and $b$ dots at one vertex explode to be replaced with $a$ dots at the vertex to the left, as popularized by Tanton \cite{JT}. 

For example, to calculate 5 in base $\frac{3}{2}$, we start with 5 dots in the rightmost box, box 0. We can represent this state as 5. Since we have more than three dots we have an explosion: the number of dots in the rightmost box decreases by 3 and we add 2 dots to the box on the left. As the result we get $(5)_\frac{3}{2}= 22$. This example is represented in Figure~\ref{fig:base3over2}.

\begin{figure}[ht!]
\centering
\includegraphics[scale=0.4]{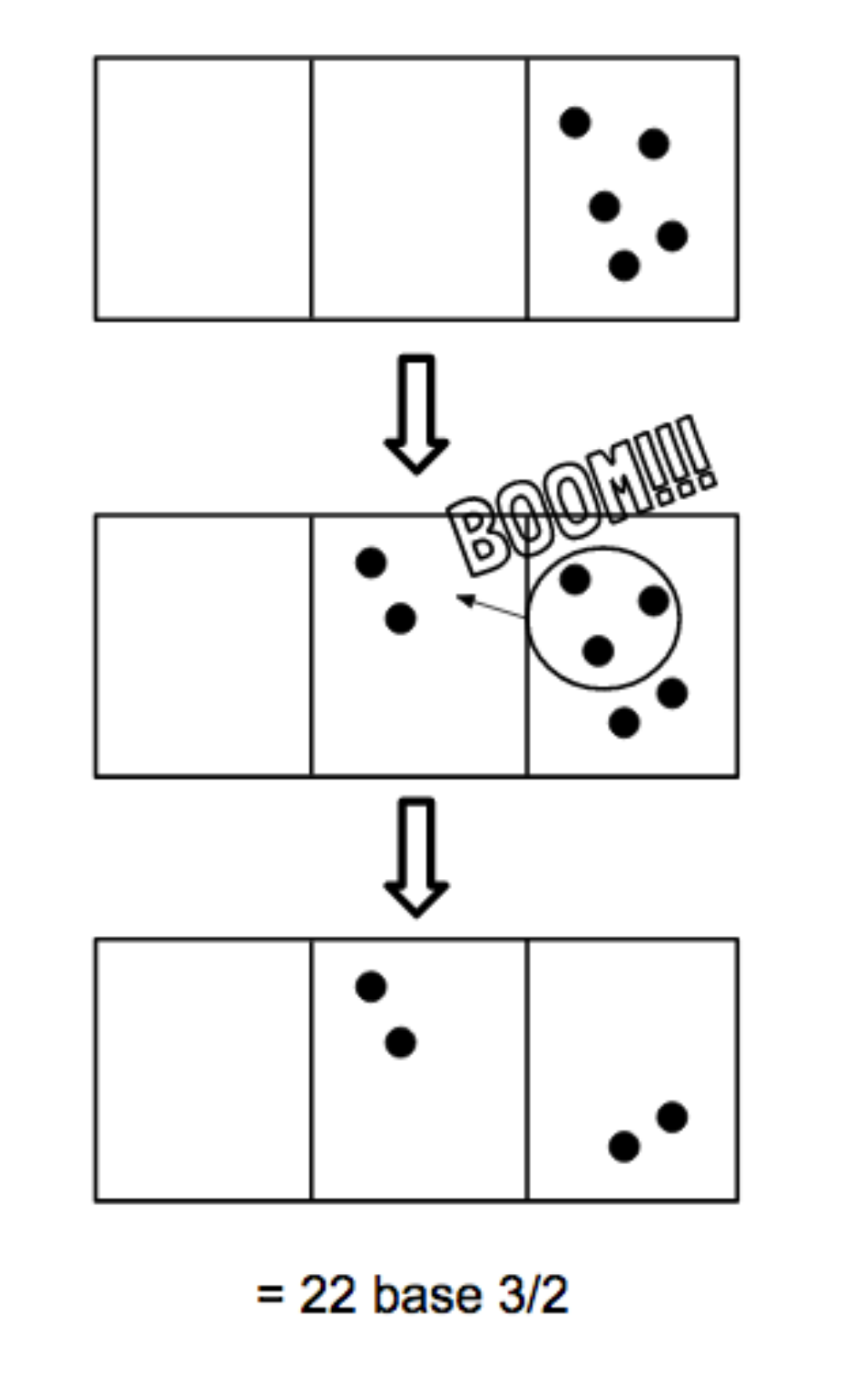}
\caption{Exploding dots show how to represent 5 in base $\frac{3}{2}$.}\label{fig:base3over2}
\end{figure}

We can also see such bases as a chip-firing process on a graph with vertices indexed by non-negative integers and an additional vertex called the \textit{sink}. Vertex $i$ has $a$ outgoing edges to vertex $i-1$ and $b-a$ outgoing edges to the sink. Placing $n$ chips at the vertex marked 0, the chip-firing process ends in a state where there are $d_i$ chips at vertex $-i$ for each $i$, where $(n)_\frac{b}{a} = d_kd_{k-1}\ldots d_2d_1d_0$. This approach was introduced by Propp \cite{JP}.

The first few non-negative integers written in base $\frac{3}{2}$ form sequence A024629 in the OEIS \cite{OEIS}: 
\[0, 1, 2, 20, 21, 22, 210, 211, 212, 2100, \ldots.\]

Similarly to integers, we can express a real number in base $\frac{b}{a}$ by allowing digits after the radix:
\[[d_k d_{k-1}\cdots d_0.d_{-1}d_{-2}\ldots]_\frac{b}{a} = \sum_{i=\infty}^k d_i\frac{b^i}{a^i}.\]
Such bases were thoroughly studied by Akiyama et al.~\cite{AFS} and by Frougny and Klouda \cite{FK}. For a rational base $\frac{b}{a}$ this representation allows using digits $\{0,1,\ldots,b-1\}$. The representation of a number in this base is not unique, but integers can be uniquely represented as finite strings.

\section{The state of the game}\label{sec:series}

Our game starts in the state at which all $n$ chips are at the origin. After several firings we will reach a state $\sigma = s_{-k}\ldots s_{-2}s_{-1}s_0.s_1s_2\ldots s_\ell$ for some integers $k$ and $\ell$, where $s_{-k} > 0$ and $s_{\ell} > 0$. The state $\sigma$ means there are $s_i$ chips at vertex $i$ for each $i$. Firing from a vertex $i$ means replacing $s_i$ with $s_i - a-b$, replacing $s_{i-1}$ with $s_{i-1}+a$ and replacing $s_{i+1}$ with $s_{i+1}+b$. If $\sigma$ is the final state, then $s_i < a + b$ for $-k \leq i \leq \ell$.

Define the \textit{state polynomial} for state $\sigma$ as the Laurent polynomial
\[S_\sigma(t) = \sum_{m=-k}^\ell s_mt^{-m}.\] 
Note that we use negative powers of $t$ for digits located to the right of the radix. The reason for this is that we want $S_\sigma(t)$ to be equal to the evaluation of $\sigma$ in base $t$.
\[S_\sigma(t) = (\sigma)_{t}.\]

The following proposition shows that the state polynomial evaluated at $t=1$ or $t = \frac{b}{a}$ is invariant under the chip-firing procedure.

\begin{proposition}\label{prop:invariant}
For any state $\sigma$ we have
\[S_\sigma(1) = \sum_{m=-k}^\ell s_m = n\]
and 
\[S_\sigma\left(\frac{b}{a} \right) = \sum_{m=-k}^\ell s_m\left(\frac{b}{a} \right)^m= n.\]
\end{proposition}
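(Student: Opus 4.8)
The plan is to prove both invariants by showing that they hold in the initial state and are unchanged by a single firing; since every state is reached from the initial state by a finite sequence of firings, this suffices. For the initial state, all $n$ chips sit at the origin, so $S_\sigma(t) = n t^0 = n$, and evaluating at $t=1$ or $t=\frac{b}{a}$ gives $n$ in either case. So the entire content is the invariance under one firing.

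Next I would analyze how a single firing at vertex $i$ changes the state polynomial. A firing replaces $s_i$ with $s_i - a - b$, $s_{i-1}$ with $s_{i-1}+a$, and $s_{i+1}$ with $s_{i+1}+b$, leaving all other coefficients fixed. Since $S_\sigma(t) = \sum_m s_m t^{-m}$, the change in the state polynomial is
\[
\Delta S(t) = -(a+b)\,t^{-i} + a\,t^{-(i-1)} + b\,t^{-(i+1)} = t^{-(i+1)}\bigl(a t^{2} - (a+b) t + b\bigr).
\]
I would then factor the quadratic $a t^2 - (a+b)t + b = (t-1)(at - b)$, so that
\[
\Delta S(t) = t^{-(i+1)}(t-1)(a t - b).
\]
This factored form makes the conclusion immediate: the factor $(t-1)$ vanishes at $t=1$, and the factor $(at-b)$ vanishes at $t=\frac{b}{a}$. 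Hence $\Delta S(1) = 0$ and $\Delta S\!\left(\frac{b}{a}\right) = 0$, so both evaluations are preserved by every firing.

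Finally I would assemble these pieces: the two evaluations equal $n$ in the initial state and are invariant under each firing, so by induction on the number of firings they equal $n$ for every reachable state $\sigma$, which proves both displayed identities. There is no real obstacle here; the one point requiring a word of care is the choice of exponent convention (negative powers to the right of the radix), which is exactly what makes $S_\sigma(t)$ equal to $(\sigma)_t$ and guarantees that the firing increment factors cleanly as above. I would make sure to note that the identity $S_\sigma(1)=n$ is just conservation of the total number of chips, giving the reader an intuitive anchor for why the $t=1$ case holds.
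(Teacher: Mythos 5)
Your proof is correct and is essentially identical to the paper's: both compute the change in $S_\sigma(t)$ under a single firing at vertex $i$, factor it as $t^{-(i+1)}(t-1)(at-b)$, observe that it vanishes at $t=1$ and $t=\frac{b}{a}$, and conclude by induction from the initial state. No gaps; the algebra and the factorization check out.
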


\begin{proof}
Suppose $\tau$ is a state in the game and state $\tau'$ is the state reached from $\tau$ by firing vertex $i$. Then 
\[S_{\tau'}(t) = S_\tau(t) + at^{-i+1} - (a+b)t^{-i} + bt^{-i-1} = S_\tau(t) + t^{-i-1}(at -b)(t -1).\]
Thus, $S_{\tau'}(1) = S_{\tau}(1)$ and $S_{\tau'}(\frac{b}{a}) = S_{\tau}(\frac{b}{a})$. Repeated application of this argument shows that if $\tau''$ is any state reachable from $\tau$ by a sequence of vertex-firings, then $S_{\tau''}(1) = S_{\tau}(1)$ and $S_{\tau''}(\frac{b}{a}) = S_{\tau}(\frac{b}{a})$. The result now follows by setting $\tau$ equal to the initial state and $\tau'' = \sigma$, and then noting that for the initial state $\tau$ we have 
\[S_{\tau}(1) = S_{\tau}\left(\frac{b}{a} \right) = n.\]
\end{proof}

As we will see soon, for $a \neq b$ the left parts of states look different from the right parts. Given state $\sigma = s_{-k}\ldots s_{-2}s_{-1}s_0.s_1s_2\ldots s_\ell$, we call $s_{-k}\ldots s_{-2}s_{-1}s_0$ its \textit{left part} and denote it as $\sigma^L$. Similarly, we call $0.s_1s_2\ldots s_\ell$ the \textit{right part} of the state $\sigma$ and denote it as $\sigma^R$. Correspondingly, we can divide the polynomial $S_\sigma(t)$ into its left part $S_{\sigma^L}$ and right part $S_{\sigma^R}$, where
\[S_\sigma(t) = S_{\sigma^L}(t) + S_{\sigma^R}(t),\]
where $S_{\sigma^L}(t) = \sum_{m=-k}^0 s_mt^{-m}$ and  $S_{\sigma^R}(t) = \sum_{m=1}^\ell s_mt^{-m}$.

Before proving the following proposition we introduce a name for the vertex to the right of the origin, that is, the vertex with index 1. We call it the \textit{origout}. We also denote by $f_i(\sigma)$ the number of times the vertex $i$ fires during a game that leads from the initial state to $\sigma$.

\begin{proposition}\label{prop:valueAndFiring}
If we start with $n$ chips at the origin and reach state $\sigma$, then we have
\[S_{\sigma^L}(1) = n - bf_0(\sigma) + af_1(\sigma) \quad \textrm{ and } \quad S_{\sigma^R}(1) = bf_0(\sigma) - af_1(\sigma),\]
and 
\[S_{\sigma^L}\left(\frac{b}{a} \right)= n - a(f_0(\sigma) - f_1(\sigma)) \quad  \textrm{ and } \quad  S_{\sigma^R}\left(\frac{b}{a} \right) = a(f_0(\sigma) - f_1(\sigma)).\]
\end{proposition}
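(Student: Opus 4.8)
The plan is to reduce the four identities to just the two concerning the right part, $S_{\sigma^R}(1)$ and $S_{\sigma^R}(\frac{b}{a})$, and then to establish those by induction on the number of firings. Because Proposition~\ref{prop:invariant} gives $S_\sigma(1) = S_\sigma(\frac{b}{a}) = n$ and because $S_\sigma = S_{\sigma^L} + S_{\sigma^R}$, once the two right-part formulas are known the two left-part formulas follow at once by subtracting from $n$: $S_{\sigma^L}(1) = n - (bf_0(\sigma) - af_1(\sigma))$ and $S_{\sigma^L}(\frac{b}{a}) = n - a(f_0(\sigma) - f_1(\sigma))$. So the real content is to track how the right part evolves as the game runs.

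The guiding observation is that chips cross into or out of the right part (the vertices of positive index) only through the single boundary between the origin and the origout. I would run the induction on the number of firings, with the initial state as the base case: there all $n$ chips sit at the origin, so $S_{\sigma^R} \equiv 0$ and $f_0(\sigma) = f_1(\sigma) = 0$, and both right-part identities hold trivially. For the inductive step I fire a vertex $i$ and split into cases by the position of $i$ relative to the boundary, checking in each case that the change in $S_{\sigma^R}$ agrees with the change in the proposed right-hand side.

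The case analysis is the main computation. When $i \leq -1$ the three affected vertices $i-1, i, i+1$ all lie weakly left of the origin, so $S_{\sigma^R}$ is untouched and $f_0, f_1$ are unchanged. When $i \geq 2$ all three affected vertices lie in the right part; for the value at $t = 1$ the net change is $a - (a+b) + b = 0$, and for the $\frac{b}{a}$ evaluation I would quote the firing identity $t^{-i-1}(at-b)(t-1)$ from the proof of Proposition~\ref{prop:invariant}, which vanishes at $t = \frac{b}{a}$ since $at - b = 0$ there. The two boundary firings carry the real bookkeeping. Firing the origin ($i = 0$) adds $b$ chips to vertex $1$, raising $S_{\sigma^R}(1)$ by $b$ and $S_{\sigma^R}(\frac{b}{a})$ by $bt^{-1} = a$, matching the increase of $f_0$ by one. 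Firing the origout ($i = 1$) removes $a + b$ chips from vertex $1$ but returns $b$ of them to vertex $2$, so $S_{\sigma^R}(1)$ falls by $a$, while a direct evaluation of $-(a+b)t^{-1} + bt^{-2}$ at $t = \frac{b}{a}$ gives $-a$, matching the increase of $f_1$.

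The one delicate point, and where I expect the write-up to need the most care, is the two boundary cases: there the full increment $t^{-i-1}(at-b)(t-1)$ from Proposition~\ref{prop:invariant} is split between the two sides of the origin, so I cannot simply quote that identity but must isolate by hand the part contributed by the affected vertices that lie in the right part. Once all four cases are verified the induction closes, giving the right-part formulas for every reachable $\sigma$, and the left-part formulas then follow from Proposition~\ref{prop:invariant} as above.
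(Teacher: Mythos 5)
Your proof is correct and takes essentially the same approach as the paper: the paper tracks how the evaluations of one side change under origin and origout firings (doing the left part directly and obtaining the right part via a similar argument or Proposition~\ref{prop:invariant}), and you do the mirror image, tracking the right part by induction on firings and recovering the left part from Proposition~\ref{prop:invariant}. Your four-case bookkeeping, including the boundary increments $bt^{-1}$ and $-(a+b)t^{-1}+bt^{-2}$ evaluated at $t=1$ and $t=\frac{b}{a}$, matches the paper's computation exactly.
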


\begin{proof}
The value of $S_{\sigma^L}(1)$ or $S_{\sigma^L}(\frac{b}{a})$ can only change during the firing at the origin or origout. If the origin fires, the value of $S_{\sigma^L}(1)$ decreases by $b$ and the value of $S_{\sigma^L}(\frac{b}{a})$ decreases by $(a+b) - a\cdot \frac{b}{a} = a$. If the origout fires the value of $S_{\sigma^L}(1)$, as well as the value of $S_{\sigma^L}(\frac{b}{a})$, increases by $a$. A similar argument works for the right part, or Proposition~\ref{prop:invariant} can be used.
\end{proof}

The following consequence of the previous proposition is of importance.

\begin{corollary}
If we start with $n$ chips at the origin and reach state $\sigma$, then the values $S_{\sigma^L}(\frac{b}{a})$ and $S_{\sigma^R}(\frac{b}{a})$ are integers.
\end{corollary}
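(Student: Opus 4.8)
The plan is to read both values off directly from Proposition~\ref{prop:valueAndFiring}, treating the corollary as an immediate consequence. First I would point out what makes the statement nontrivial. Straight from the definition, $S_{\sigma^L}(\frac{b}{a}) = \sum_{m=-k}^0 s_m (\frac{b}{a})^m$ is a priori only a rational number: the terms with $m < 0$ contribute fractions whose denominators are powers of $b$. The same is true of $S_{\sigma^R}(\frac{b}{a}) = \sum_{m=1}^\ell s_m (\frac{b}{a})^{-m}$, whose summands all carry denominators that are powers of $b$. So the content of the corollary is precisely that these particular rational numbers turn out to be integers.

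The substance of the argument is then just to quote the closed forms already established in Proposition~\ref{prop:valueAndFiring}, namely $S_{\sigma^L}(\frac{b}{a}) = n - a(f_0(\sigma) - f_1(\sigma))$ and $S_{\sigma^R}(\frac{b}{a}) = a(f_0(\sigma) - f_1(\sigma))$. The key observation is that every symbol appearing on the right-hand sides is an integer: $a$ is a fixed positive integer parameter of the game, $n$ is the integer starting number of chips, and the firing counts $f_0(\sigma)$ and $f_1(\sigma)$ are non-negative integers by definition. Hence $a(f_0(\sigma) - f_1(\sigma))$ is an integer, and $n$ minus that integer is again an integer, which finishes the proof.

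There is essentially no obstacle once the previous proposition is in hand; the main step is simply to invoke it. What I would emphasize in the write-up is \emph{why} integrality is not visible from the raw definition: the individual summands $s_m (b/a)^m$ have denominators that are powers of $b$, so the integrality of their total reflects a cancellation that is made transparent only through the firing-count formula. The noteworthy feature of Proposition~\ref{prop:valueAndFiring} exploited here is that its $\frac{b}{a}$-evaluation expressions involve the coefficient $a$ alone rather than any genuine power of $\frac{b}{a}$, and it is exactly this that collapses the a priori rational value into an integer.
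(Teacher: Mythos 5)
Your proof is correct and matches the paper's intent exactly: the paper states this corollary as an immediate consequence of Proposition~\ref{prop:valueAndFiring}, with no further argument, and your write-up simply makes explicit that $n$, $a$, $f_0(\sigma)$, and $f_1(\sigma)$ are integers, hence so are $n - a(f_0(\sigma)-f_1(\sigma))$ and $a(f_0(\sigma)-f_1(\sigma))$. One trivial caution: with the paper's definition $S_\sigma(t) = \sum_m s_m t^{-m}$, the left-part summands actually have denominators that are powers of $a$ (not $b$), but this notational slip (which mirrors an inconsistency in the paper's own Proposition~\ref{prop:invariant}) does not affect the argument.
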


Another consequence will play a role later. It follows from the fact that the two equations involving $f_0(\sigma)$ and $f_1(\sigma)$ are linearly independent.

\begin{corollary}\label{cor:uniequelydefined}
If $a \neq b$, then $f_0(\sigma)$ and $f_1(\sigma)$ are uniquely defined, that is, they are independent of the order of the firings.
\end{corollary}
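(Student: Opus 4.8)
The plan is to treat the two identities of Proposition~\ref{prop:valueAndFiring} that involve $S_{\sigma^L}$ as a single $2\times 2$ linear system whose unknowns are the firing counts $f_0(\sigma)$ and $f_1(\sigma)$. The essential observation, which I would state first, is that the numbers $S_{\sigma^L}(1)$ and $S_{\sigma^L}(\frac{b}{a})$ are read directly off the state polynomial of $\sigma$, hence depend only on $\sigma$ itself and not on any particular firing history that produced it; the initial count $n$ is likewise fixed. Consequently, \emph{every} firing sequence from the initial configuration to $\sigma$ must produce firing counts satisfying one and the same pair of equations.

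Concretely, I would rewrite the two relevant equations from Proposition~\ref{prop:valueAndFiring} in matrix form,
\[
\begin{pmatrix} b & -a \\ a & -a \end{pmatrix}
\begin{pmatrix} f_0(\sigma) \\ f_1(\sigma) \end{pmatrix}
=
\begin{pmatrix} n - S_{\sigma^L}(1) \\[2pt] n - S_{\sigma^L}\!\left(\frac{b}{a}\right) \end{pmatrix},
\]
and then compute the determinant of the coefficient matrix, namely $b(-a)-(-a)a = a(a-b)$. Since $a$ is a positive integer and $a \neq b$, this determinant is nonzero, so the two equations are linearly independent (this is exactly the content of the remark preceding the corollary). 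The system is therefore nonsingular and admits a unique solution $(f_0(\sigma),f_1(\sigma))$.

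Because the right-hand side is completely determined by $\sigma$ and $n$, and the coefficient matrix is fixed, this unique solution is the same no matter which firing sequence reaches $\sigma$; this gives the claim. I do not anticipate any real obstacle here beyond the routine determinant check. The one point deserving care is the justification that $S_{\sigma^L}(1)$ and $S_{\sigma^L}(\frac{b}{a})$ are genuine invariants of the state $\sigma$ rather than order-dependent quantities, since it is precisely this fact that lets me regard the right-hand side as fixed data and thereby pin down $f_0(\sigma)$ and $f_1(\sigma)$ uniquely.
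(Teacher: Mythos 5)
Your proof is correct and takes essentially the same approach the paper intends: the remark preceding the corollary asserts exactly that the two equations of Proposition~\ref{prop:valueAndFiring} involving $f_0(\sigma)$ and $f_1(\sigma)$ are linearly independent, and your determinant computation $b(-a)-(-a)a = a(a-b) \neq 0$ for $a \neq b$ is that assertion made explicit. Your careful point that $S_{\sigma^L}(1)$ and $S_{\sigma^L}\left(\frac{b}{a}\right)$ depend only on $\sigma$ is immediate from the definition of the state polynomial (its coefficients are the chip counts of $\sigma$ itself), so the argument is complete.
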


For any state $\sigma$, consider the value
\[M_\sigma = \sum_{m=-k}^\ell ms_m.\] 

In the next proposition we show how $M_\sigma$ relates to the number of firings in case $a \neq b$.

\begin{proposition}
Assume $a \neq b$. For any state $\sigma$, the number of firings that lead from the initial state with $n$ chips at the origin to state $\sigma$ is
\[\frac{M_\sigma}{a-b}.\]
\end{proposition}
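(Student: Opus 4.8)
The plan is to track how $M_\sigma$ evolves along a firing sequence, exploiting the fact that each individual firing changes it by one and the same amount regardless of where the firing occurs. This pins down the number of firings purely from the two endpoints of the sequence.

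First I would compute the effect of a single firing on $M_\sigma=\sum_m m\,s_m$. Firing vertex $i$ sends $s_i\mapsto s_i-(a+b)$, $s_{i-1}\mapsto s_{i-1}+a$, and $s_{i+1}\mapsto s_{i+1}+b$, so $M_\sigma$ changes by
\[(i-1)a+(i+1)b-i(a+b)=b-a,\]
which is independent of $i$. The same constant can be extracted from Proposition~\ref{prop:invariant} with no new computation: since $M_\sigma=-S_\sigma'(1)$, differentiating the identity $S_{\tau'}(t)=S_\tau(t)+t^{-i-1}(at-b)(t-1)$ at $t=1$ (where the factor $t-1$ vanishes) changes $S_\sigma'(1)$ by $a-b$, hence $M_\sigma$ by $b-a$.

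Next I would feed in the boundary values. The initial configuration places all $n$ chips at index $0$, so $M=0$ there. If $F$ firings lead from the initial state to $\sigma$, telescoping the constant increment gives $M_\sigma=F(b-a)$, and since the hypothesis $a\neq b$ guarantees $b-a\neq 0$ I may solve for $F=\frac{M_\sigma}{b-a}$. I would emphasize that this one argument does double duty: because the increment is a fixed nonzero constant while $M_\sigma$ depends only on $\sigma$, every firing sequence reaching $\sigma$ must have the same length, so ``the number of firings'' is well defined in the first place, mirroring the order-independence of $f_0,f_1$ recorded in Corollary~\ref{cor:uniequelydefined}.

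I do not expect a genuine obstacle here beyond bookkeeping; the substantive content is really the observation that the increment is constant, from which well-definedness and the formula fall out together. The only point demanding care is orientation of sign: the per-firing increment is $b-a$, so it matches the stated denominator only once the sign in the definition of $M_\sigma$ is reconciled with the convention that the vertex at index $m$ carries degree $-m$ in $S_\sigma(t)=\sum_m s_m t^{-m}$. Finally, the assumption $a\neq b$ is indispensable, since for $a=b$ every firing fixes $M_\sigma$ and the firing count can no longer be recovered from it.
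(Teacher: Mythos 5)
Your proof is correct and takes essentially the same route as the paper's: the per-firing change of $M_\sigma$ is a constant independent of which vertex fires, $M_\sigma=0$ at the initial state, and telescoping gives the count. Your closing sign caveat is justified, and in fact it exposes an inconsistency in the paper rather than a flaw in your argument: with the literal definition $M_\sigma=\sum_m m s_m$, firing vertex $i$ changes $M_\sigma$ by $(i-1)a-i(a+b)+(i+1)b=b-a$, exactly as you computed, so the count is $M_\sigma/(b-a)$; the paper's own proof instead writes the increment as $a(1-i)-(a+b)(-i)+b(-i-1)=a-b$, tacitly weighting vertex $m$ by $-m$ (the exponent convention in $S_\sigma(t)=\sum_m s_m t^{-m}$), which agrees with the stated denominator $a-b$ but contradicts the displayed definition of $M_\sigma$. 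Either the definition should carry weight $-m$ or the statement should read $M_\sigma/(b-a)$; your version is the internally consistent one. Your two additions --- deriving the increment by differentiating the identity of Proposition~\ref{prop:invariant} at $t=1$ via $M_\sigma=-S_\sigma'(1)$, and noting that constancy of the increment makes ``the number of firings'' well defined in the first place --- are sound, but they refine rather than replace the paper's argument.
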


\begin{proof}
Suppose $\tau$ is a state in the game and state $\tau'$ is the state reached from $\tau$ by firing vertex $i$. Then 
\[M_{\tau'} = M_\tau + a(1-i) - (a+b)(-i) + b(-i-1) = M_\tau + a-b.\]
Note that if $\tau$ is the initial state, then $M_\tau=0$. As each firing increasing this value by $a-b$, we see that $M_\sigma$ equals the number of firing times $a-b$.
\end{proof}

Our goal is to describe the final state of the $a$-$b$ chip-firing game. We denote this state as $\phi(n)$, and its left and right parts as $\phi(n)^L$ and $\phi(n)^R$ correspondingly.

\section{$a$-$b$ Firing game, when $a$ and $b$ are not coprime}\label{sec:notcoprime}

Suppose $a$ and $b$ have their greatest common divisor $d > 1$. Consider an $a$-$b$ chip-firing game starting with $n$ chips. Suppose $n = pd + q$, where $0 \leq q < d$.  We can express this game through a $\frac{a}{d}$-$\frac{b}{d}$ firing game with $p$ starting chips.

\begin{proposition}\label{prop:coprime}
Under the above assumptions, both games allow the same sequences of vertices to fire. 
Consider a state $\sigma = s_{-k}\ldots s_{-2}s_{-1}s_0.s_1s_2\ldots s_\ell$ in a $\frac{a}{d}$-$\frac{b}{d}$ firing game with $p$ starting chips that happens after firings corresponding to some sequence of vertices. Then in the $a$-$b$ chip-firing game starting with $n$ chips the same sequence of firings produces the state $\sigma' = s_{-k}'\ldots s_{-2}'s_{-1}'s_0'.s_1's_2'\ldots s_\ell'$, where $s_0' = ds_0+q$ and $s_i' = ds_i$, for $m \leq i < 0$ and $0 < i \leq \ell$.
\end{proposition}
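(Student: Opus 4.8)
The plan is to argue by induction on the length of the firing sequence that the two games proceed in lockstep. Concretely, I want to show that a given sequence of vertices is a legal firing sequence in the $\frac{a}{d}$-$\frac{b}{d}$ game with $p$ chips if and only if it is legal in the $a$-$b$ game with $n$ chips, and that whenever it is legal the resulting states are related by $s_0' = ds_0 + q$ and $s_i' = ds_i$ for $i \neq 0$. It is convenient to write $\sigma'$ for the ``inflation'' of $\sigma$ defined by these relations, so that the claim becomes: inflation commutes with firing. The base case is immediate, since the initial state of the small game is $p$ at the origin and $0$ elsewhere, whose inflation is $dp + q = n$ at the origin and $0$ elsewhere, which is exactly the initial state of the big game.

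The crux is to show that, for corresponding states, a vertex is ready to fire in the small game exactly when it is ready to fire in the big game. Since $d \mid a$ and $d \mid b$, the quantity $c := \frac{a+b}{d}$ is a positive integer; the small-game threshold is $c$ while the big-game threshold is $a+b = cd$. For any vertex $i \neq 0$ we have $s_i' = ds_i$, so $s_i' \geq cd \iff s_i \geq c$, and the two firing conditions coincide. The origin is the only subtle case because of the extra $q$ chips: there $s_0' = ds_0 + q$ with $0 \leq q < d$, and I would observe that $ds_0 + q \geq cd \iff ds_0 \geq cd - q$, which for $0 \leq q < d$ is equivalent to $s_0 \geq c$. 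Thus the presence of $q$ never changes whether the origin is ready to fire, and this is precisely where the hypothesis $q < d$ enters.

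Having matched the firing conditions, I would verify that a single legal firing preserves the correspondence by a short case check on which of $i-1$, $i$, $i+1$ coincides with the origin. When none of them is the origin, every affected entry is simply multiplied by $d$, and firing changes $ds_i$ by $-(a+b) = -cd$ and the neighbors by $+a$ and $+b$, matching $d$ times the small-game update. When the origin is the firing vertex or one of its two neighbors, exactly one affected entry carries the additive constant $q$; since firing only adds or subtracts the fixed amounts $a+b$, $a$, $b$, all of which are divisible by $d$, the constant $q$ is carried along unchanged and the relation $s_0' = ds_0 + q$ is preserved. Combining the base case, the threshold equivalence, and this update step yields both halves of the proposition by induction on the number of firings.

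The only real obstacle is the bookkeeping at the origin, and the point to get right is that $q < d$ together with $c \in \mathbb{Z}$ guarantees that the extra $q$ chips are never enough to trigger an otherwise-illegal firing and are never consumed by one. In effect they behave as inert padding that rides along with the origin throughout the game, which is what makes the two games indistinguishable at the level of firing sequences.
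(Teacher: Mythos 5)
Your proof is correct and is essentially the paper's own argument made rigorous: the paper disposes of this proposition in one sentence (``group chips into bags of $d$ chips each and consider the bags to be new chips''), and your inflation map $s_i \mapsto ds_i$ (with the inert remainder $q$ riding at the origin) is exactly that bagging correspondence, spelled out as an induction with the threshold check $ds_0 + q \geq \frac{a+b}{d}\cdot d \iff s_0 \geq \frac{a+b}{d}$. The extra care you take at the origin---showing $q < d$ can neither enable nor block a firing---is precisely the detail the paper leaves implicit.
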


\begin{proof}
We can group chips into bags of $d$ chips each and consider the bags to be new chips.
\end{proof}

The proposition allows to express the final state of $a$-$b$ firing through the final state of $\frac{a}{d}$-$\frac{b}{d}$ firing.

For example, the final state of 5 in a 1-1 firing is 111.11, while the final state of 25 in a 5-5 firing is 555.55 and the final state of 26 in a 5-5 firing is 556.55.

For another example, the final state of 21 in a 2-3 firing is 442.2243, while the final state of 42 in a 4-6 firing is 884.4486.

\section{An example: $a$-$a$ firing game}\label{sec:a-a}

In this section we describe the final state of the $a$-$a$ firing game. As we proved in Proposition~\ref{prop:coprime}, it is enough to describe the final state of the 1-1 firing game. As firing is symmetrical with respect to the origin, the final result is symmetrical too.  We can calculate the final state for a small initial number of chips by hand. These final states produce the following sequence:
\[1.\ 10.1\ 11.1\ 110.11\ 111.11\ 1110.111.\]

We see that a pattern emerges. To prove it we introduce the following notation. We denote the word $dddd\ldots ddd$ with $k$ $d$'s as $d_k$. In the following proof $d=1$. We also denote the word $202020\ldots 020202$ of $k$ twos alternating with zeros as $\beta_k$.

\begin{lemma}
In the 1-1 chip-firing game the final state is either 
\[1_k0.1_k \textrm{ for } n=2k \textrm{ or } 1_{k+1}.1_k \textrm{ for } n = 2k+1.\]
\end{lemma}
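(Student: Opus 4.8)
The plan is to induct on $n$, using the fact recorded in Section~\ref{sec:preliminaries} that $\phi(n+1)$ is obtained from $\phi(n)$ by placing one extra chip at the origin and then stabilizing; the small cases displayed before the lemma serve as base cases. The inductive step splits according to the parity of $n$, and the even-to-odd case is immediate. If $\phi(2k)=1_k0.1_k$, then the origin holds $0$ chips, so adding a chip gives $1_k1.1_k=1_{k+1}.1_k$, in which every entry is below $a+b=2$; this configuration is already stable, hence equals $\phi(2k+1)$, as claimed.

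The content is in the odd-to-even case. If $\phi(2k+1)=1_{k+1}.1_k$, the origin holds one chip, so adding a chip produces the symmetric configuration $1_k2.1_k$, and I must show it stabilizes to $1_{k+1}0.1_{k+1}=\phi(2k+2)$. Because the final state does not depend on the firing order, it is enough to exhibit one legal firing sequence from $1_k2.1_k$ to $1_{k+1}0.1_{k+1}$; stability of the target (all entries in $\{0,1\}$) then finishes the argument. I would use a symmetric ``accordion'' sequence of $2k+1$ waves, where in each wave one simultaneously fires exactly those vertices currently holding $2$ chips. The behavior is clean: during the first $k+1$ waves a block of $2$'s separated by $0$'s expands outward from the origin, consuming the neighboring $1$'s; at the outermost wave it reaches the boundary and deposits a fresh $1$ at each of $\pm(k+1)$; and during the final $k$ waves the block of $2$'s contracts back toward the origin, leaving a trail of $1$'s behind it, until the last wave fires the origin alone and turns it into a $0$.

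Making this rigorous is the main obstacle, and I would handle it by proving, by induction on the wave index, that the configuration always has an explicit shape. Concretely, just before wave $j$ in the expansion phase ($0\le j\le k$), the vertices $i$ with $|i|\le j$ and $i\equiv j\pmod 2$ hold $2$ chips, those with $|i|\le j$ and $i\not\equiv j\pmod 2$ hold $0$, those with $j<|i|\le k$ hold $1$, and all others hold $0$; a mirror-image invariant governs the contraction phase. Granting the invariant, each vertex fired in a given wave holds exactly $2$ chips, so the sequence is legal, and after the last wave the configuration is exactly $1_{k+1}0.1_{k+1}$. As a cross-check one can instead solve for the firing counts directly: requiring that firing vertex $i$ a total of $u_i$ times (so that the net change at $i$ is $u_{i-1}-2u_i+u_{i+1}$, since each neighbor contributes one chip and each firing removes two) carries $1_k2.1_k$ to the target forces $u_i=\max(k+1-|i|,0)$, a total of $(k+1)^2$ firings. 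This is precisely the set of firing counts realized by the accordion sequence above, which both confirms the computation and, as these counts are nonnegative with the target stable, matches what the least-action heuristic for chip-firing predicts.
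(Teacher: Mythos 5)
Your proposal is correct and follows essentially the same route as the paper: induction on $n$ with the even-to-odd step immediate, and the odd-to-even step handled by the same symmetric ``accordion'' of waves (the paper writes these intermediate states as $1_{k-i}\beta_{i+1}1_{k-i}$ during expansion and $1_i0\beta_{k-i+1}01_i$ during contraction, exactly your invariant). Your explicit wave-index invariant and the firing-count cross-check $u_i=\max(k+1-|i|,0)$ merely make rigorous what the paper states more informally.
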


\begin{proof}
We prove this by induction. We already calculated small examples demonstrating the induction base. Suppose the theorem is true for $n=2k$ chips, that is, the final state for this number of chips is $1_k0.1_k$. In what follows we omit the dot, as the dot can be recovered from symmetry. Thus, the dotless final state is $1_k01_k$. If we add one more chip, the final state for $n=2k+1$ is $1_{2k+1}$. This proves the first step of the induction.

Now we add one more chip to get a state $1_k21_k = 1_k\beta_1 1_k$ which corresponds to $2k+2$ initial chips. After one firing at the origin, we get $1_{k-1}2021_{k-1}= 1_{k-1}\beta_21_{k-1}$. Now we fire from both vertices with twos to get  $1_{k-2}202021_{k-2} = 1_{k-2}\beta_31_{k-2}$. After several iterations we get the state $1_{k-i}\beta_{i+1}1_{k-i}$, after which we get to $\beta_{k+1}$. Then all twos fire again to get $10\beta_{k}01$, and again to get to $110\beta_{k-1}011$. After several iterations we get $1_i0\beta_{k-i+1}01_i$. This leads us to $1_k0201_k$, and finally to $1_{k+1}01_{k+1}$. 
\end{proof}

Applying Proposition~\ref{prop:coprime} to the previous lemma we get the following corollary.

\begin{corollary}
If we start with $n$ chips, the final state for the $a$-$a$ firing has the remainder of $n$ divided by $2a$ chips at the origin. It also has $\lfloor \frac{n}{2a} \rfloor$ consecutive vertices with $a$ chips on the each side of the origin.
\end{corollary}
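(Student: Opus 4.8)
The plan is to deduce the corollary from the preceding lemma by invoking Proposition~\ref{prop:coprime} with the observation that for the $a$-$a$ game we have $\gcd(a,a) = a$, so the reduced game is precisely the $1$-$1$ game. First I would write $n = pa + q$ with $0 \le q < a$, so that Proposition~\ref{prop:coprime} identifies the $a$-$a$ game on $n$ chips with the $1$-$1$ game on $p$ chips, via the digit transformation $s_0' = a s_0 + q$ and $s_i' = a s_i$ for $i \neq 0$. The lemma then hands us the final state of the $1$-$1$ game explicitly, split according to the parity of $p$, and all that remains is to push it through this transformation and reconcile the result with the claimed description in terms of $n \bmod 2a$ and $\lfloor n/(2a)\rfloor$.

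Concretely, I would split into two cases. If $p = 2k$ is even, the lemma gives the $1$-$1$ final state $1_k 0 . 1_k$, so $s_0 = 0$ and there are $k$ vertices holding one chip on each side. Applying the transformation yields origin value $s_0' = a\cdot 0 + q = q$ and side vertices of value $a$, with $k$ of them on each side. If $p = 2k+1$ is odd, the lemma gives $1_{k+1}.1_k$, so $s_0 = 1$ with $k$ unit vertices on each side, and the transformation produces origin value $s_0' = a + q$ and again $k$ side vertices of value $a$ on each side.

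The step I expect to require the most care is the arithmetic check that these two outputs both match the uniform description in the corollary. In the even case $n = 2ka + q$ with $0 \le q < a < 2a$, so $q = n \bmod 2a$ and $k = \lfloor n/(2a)\rfloor$. In the odd case $n = (2k+1)a + q = 2ka + (a+q)$, and here the key inequality is $0 \le a + q < 2a$ (which holds because $q < a$), so that $a+q$ is genuinely the residue $n \bmod 2a$ and $k = \lfloor n/(2a)\rfloor$ still holds. Thus in both parities the origin carries $n \bmod 2a$ chips and each side carries $\lfloor n/(2a)\rfloor$ consecutive vertices of value $a$, which is exactly the statement. The only subtlety is ensuring the residue computed in the odd case does not overflow past $2a$, which is where the bound $q < a$ (rather than merely $q < 2a$) is essential.

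\begin{proof}
Since $a=b$, we have $d = \gcd(a,b) = a$, so by Proposition~\ref{prop:coprime} the $a$-$a$ game reduces to the $1$-$1$ game. Write $n = pa + q$ with $0 \le q < a$; then the corresponding $1$-$1$ game starts with $p$ chips, and the final states are related by $s_0' = a s_0 + q$ and $s_i' = a s_i$ for $i \neq 0$.

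If $p = 2k$, the lemma gives the $1$-$1$ final state $1_k 0 . 1_k$. The transformation yields an origin value of $a\cdot 0 + q = q$ and $k$ vertices of value $a$ on each side. Since $n = 2ka + q$ with $0 \le q < 2a$, we have $q = n \bmod 2a$ and $k = \lfloor n/(2a)\rfloor$, as claimed.

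If $p = 2k+1$, the lemma gives the $1$-$1$ final state $1_{k+1}.1_k$, with $s_0 = 1$ and $k$ unit vertices on each side. The transformation yields an origin value of $a + q$ and $k$ vertices of value $a$ on each side. Here $n = 2ka + (a+q)$, and since $q < a$ we have $0 \le a+q < 2a$, so $a+q = n \bmod 2a$ and $k = \lfloor n/(2a)\rfloor$.

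In both cases the origin holds $n \bmod 2a$ chips and each side holds $\lfloor n/(2a)\rfloor$ consecutive vertices with $a$ chips.
\end{proof}
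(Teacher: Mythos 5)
Your proof is correct and takes essentially the same approach as the paper, which likewise obtains the corollary by applying Proposition~\ref{prop:coprime} (with $d=a$) to the 1-1 lemma; you have simply made explicit the parity case analysis and the check that $q$ (resp.\ $a+q$) equals $n \bmod 2a$, which the paper leaves to the reader. The only unmentioned degenerate case is $a=1$, where no reduction is needed and the corollary is the lemma itself.
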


From now on we assume that $a$ and $b$ are coprime. We also assume that $a < b$, as the case $a > b$ can be described by symmetry.

\section{$2$-$3$ Firing.}\label{sec:23firing}

As an example, let us consider a 2-3 firing. Here is the first several final states $\phi(n)$ for $n$ starting chips, where $n$ ranges from 0 to 28: 0., 1., 2., 3., 4., 20.3, 21.3, 22.3, 23.3, 24.3, 42.13, 43.13, 44.13, 213.43, 214.43, 232.413, 233.413, 234.413, 422.2413, 423.2413, 424.2413, 442.2243, 443.2243, 444.2243, 2332.222413, 2333.222413, 2334.222413, 4222.2222243.

Let us look separately at the left and right parts. Here is the sequence $\phi(n)^L$ of left parts:
\[0,\ 1,\ 2,\ 3,\ 4,\ 20,\ 21,\ 22,\ 23,\ 24,\ 42,\ 43,\ 44,\ 213,\ 214,\ 232,\]
\[233,\ 234,\ 422,\ 423,\ 424,\ 442,\ 443,\ 444,\ 2332,\ 2333,\ 2334,\ 4222.\]

For $n \geq 13$, the transition from $\phi(n)^L$ to $\phi(n+1)^L$ can be described by adding 1 to the last digit and have some explosions afterwards. The explosions are similar to exploding dots in base $\frac{3}{2}$. If a place has more than 4 dots, then 3 dots explode adding two dots to the place on the left. This is the same rule as for the base $\frac{3}{2}$ except there is a new lower limit when the explosions are allowed.

For example, $\phi(17)^L = 234$. We add 1 more dot to the unit's place to get 235. Then 3 dots at the unit's place explode producing 252. Then we have another explosion, where we have 5, getting to 422. The explosions stop as we do not have any digit exceeding 4. Thus, $\phi(18)^L = 422$. We discuss the left part for $a$-$b$ chip-firing in Section~\ref{sec:leftside}.

We now look at the right parts $\phi(n)^R$ starting from $n=13$:
\[.43,\ .43,\ .413,\ .413,\ .413,\ .2413,\ .2413,\ .2413, \]
\[ .2243,\ .2243,\ .2243,\ .222413,\ .222413,\ .222413,\ .2222243.\]

We used a computer to calculate the final state for $n < 100$. We can see that starting from $n=13$ the right part is either $2_k43$ or $2_k413$ for some $k$. The right parts can be grouped into identical triplets:
\[\phi(3k)^R = \phi(3k+1)^R = \phi(3k+2)^R,\]
for $n > 4$.

We want to describe the transition from $\phi(3k+2)^R$ to $\phi(3k+3)^R$ for $n>4$. According to our computation, $\phi(3k+2)^L$ ends in 4.  Let $j$ denote the number of digits in the longest ending piece of $\phi(3k+2)^L$ that consists only of threes or fours. Then for each count in $j$ number 413 is replaced by 243, and 43 is replaced by 413. Coincidentally, but not surprisingly, $j$ is the number of carries (explosions) needed when transitioning from  $\phi(3k+2)^L$ to $\phi(3k+3)^L$.

For example, $\phi(17) = 234.413$. In this case $j = 2$. Thus we do the replacements twice. First, we replace 413 with 243. Then we replace 43 with 413, changing 243 to 2413. Thus, $\phi(18)^R = .2413$. We discuss the right part for $a$-$b$ chip-firing in Section~\ref{sec:rightside}.

We can observe another important property for $n > 4$. Each part evaluated in base $\frac{3}{2}$ stabilizes:
\[ (\phi(n)^R)_{\frac{3}{2}} = 2 \quad \textrm{ and } \quad (\phi(n)^L)_{\frac{3}{2}} = n-2.\] 
We discuss the analogous property for $a$-$b$ chip-firing in Section~\ref{sec:value}.

Before describing the final state we study what happens to the right part after the origin fires.

\section{What one origin firing does to the right part}\label{sec:originfiring}

In this section we look what happens to the right part after the origin fires. After the origin fires, we continue with all possible firings in the right part, while ignoring the left part, we call this process \textit{settling} the right part. We call the right part after it can no longer fire the \textit{settlement}. We denote the sequence of settlements as $\xi_k$. For example, the sequence $\xi_k$ for the 2-3 chip-firing game starts as follows:
\[.,\ .3,\ .13,\ .43,\ .413,\ .243,\ .2413,\ .2243,\ .22413, \ldots.\]

The connection between the settlements and the final right parts $\phi(n)^R$ is as follows: the right parts are settlements and the settlement index does not decrease when the number of chips increase. Formally, for any $n$ there exists $i$,  such that $\phi(n)^R = \xi_i$, and $\phi(n+1)^R = \xi_j$, where $j \geq i$.

Now we describe how to calculate $\xi_{k+1}$ from $\xi_k$. Suppose $\xi_k = .s_1s_2\ldots s_\ell$, and $\xi_{k+1} = .s_1's_2'\ldots s_\ell's_{\ell+1}'$, where we have $s_{\ell+1} = 0$ and we allow $s_{\ell+1}' = 0$. Let $j$ be the smallest $i$ such that $s_i < a$. If $s_i \geq a$, for $1 \leq i \leq \ell$, we assume that $j = \ell+1$. The following lemma describes the rule how we can get $\xi_{k+1}$ from $\xi_k$.

\begin{lemma}\label{lemma:transition}
In an $a$-$b$ chip-firing game, the transition from $\xi_k$ to $\xi_{k+1}$ can be described as follows. If $j = 1$, then $s_1' = s_1 + b$, and $s_i' = s_i$, for $i > 1$. If $j > 1$, then $s_{j-1}' = s_{j-1}-a$, and $s_{j}' = s_{j}'+b$, otherwise $s_i' = s_i$.
\end{lemma}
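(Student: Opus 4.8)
The plan is to produce $\xi_{k+1}$ by exhibiting one explicit legal firing sequence, starting from $\xi_k$ with $b$ extra chips placed on the origout (the effect of one more origin firing), and then to appeal to the order-independence of the settling process to conclude that the stable configuration it reaches is exactly $\xi_{k+1}$, regardless of the order chosen. Settling the right part while ignoring the left is itself a chip-firing game on a directed graph with a sink (the $a$ edges out of the origout lead to the ignored left part), so the abelian property of \cite{BLS} applies: any legal sequence of firings terminating in a stable configuration terminates in \emph{the} unique settlement. Hence it suffices to (i) write down a firing sequence that is legal at every step, and (ii) verify that the configuration it reaches is stable and matches the claimed formula.

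The case $j=1$ is immediate: here $s_1 < a$, so after the origin fires the origout holds $s_1 + b < a+b$ and cannot fire, and no other vertex has changed. Thus $.(s_1+b)s_2\cdots s_\ell$ is already settled and equals $\xi_{k+1}$, as claimed.

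For the case $j>1$, the defining property of $j$ gives $s_1,\ldots,s_{j-1}\geq a$ while $s_j < a$ (with $s_{\ell+1}=0<a$ covering $j=\ell+1$). I would fire the vertices $1,2,\ldots,j-1$ once each, in left-to-right order. Legality is a short induction: just before vertex $i$ fires it has received $b$ chips from the firing of vertex $i-1$ (or, for $i=1$, from the origin) and nothing yet from its right neighbor, so it holds exactly $s_i + b \geq a+b$, using $s_i \geq a$ for $i \le j-1$. The net displacement is then a telescoping computation. For $1\le i \le j-2$ vertex $i$ loses $a+b$ when it fires but regains $b$ from its left (the origin when $i=1$, else vertex $i-1$) and $a$ from vertex $i+1$, for net change $0$; vertex $j-1$ never receives $a$ from vertex $j$ (which does not fire), so it ends down by $a$; vertex $j$ receives $b$ from vertex $j-1$ and ends up by $b$. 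This reproduces $s_{j-1}' = s_{j-1}-a$, $s_j' = s_j + b$, and $s_i' = s_i$ otherwise.

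It remains to confirm stability, which is where the inequality defining $j$ does the work. Each vertex $i \le j-2$ is left at its old value $s_i < a+b$; vertex $j-1$ is left at $s_{j-1}-a \in [0,b)$, since $a \le s_{j-1} < a+b$; and vertices past $j$ are untouched. The one genuinely new constraint is at vertex $j$, now holding $s_j + b$, which is below the threshold $a+b$ precisely because $s_j < a$ — exactly the condition defining $j$. So the configuration is stable, hence equals $\xi_{k+1}$. I expect the only delicate points to be the bookkeeping of which vertex returns $a$ to which (so the telescoping cancellations are correct) and the edge case $j=\ell+1$, where the cascade runs off the right end of $\xi_k$ and creates a new final digit $s_{\ell+1}' = b < a+b$; both are handled uniformly by the argument above.
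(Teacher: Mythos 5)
Your proof is correct and takes essentially the same route as the paper's: both exhibit the identical firing sequence (vertices $1$ through $j-1$ fire once each, left to right, each receiving $b$ from its left neighbor before firing and $a$ back from its right neighbor afterward, except vertex $j-1$) and then conclude stability from $s_j < a$, so $s_j + b < a+b$. Your explicit appeal to the abelian property to certify that this particular legal sequence reaches \emph{the} settlement, and your handling of the edge case $j = \ell+1$, merely make explicit what the paper leaves implicit.
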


\begin{proof}
If $j = 1$, then $s_1 < a$. After origin fires, the value $s_1$ is changed to $s_1+b$. The new value is less than $a+b$, so there are no more firings. 

If $j > 1$, then the vertices 1 through $j$ fire exactly once in order and here is the sequence of the firing results, where we use bold for the number that is bigger than $a+b-1$:
\[.\mathbf{(s_1+b)}s_2\ldots s_\ell \quad  \rightarrow  \quad   .(s_1-a)\mathbf{(s_2+b)}s_3\ldots s_\ell \quad  \rightarrow \]
\[.s_1(s_2-a)\mathbf{(s_3+b)}s_4\ldots s_\ell \quad  \rightarrow  \quad  .s_1s_2(s_3-a)\mathbf{(s_4+b)}s_5\ldots s_\ell \quad  \rightarrow \ldots\]
\[.s_1\ldots s_{j-3}(s_{j-2}-a)\mathbf{(s_{j-1}+b)}s_{j}\ldots s_\ell \quad  \rightarrow \]
\[.s_1\ldots s_{j-2}(s_{j-1}-a)(s_{j}+b)s_{j+1}\ldots s_\ell.\]
At this point all firings stop as every vertex on the right side has less than $a+b$ chips.
\end{proof}

We call a settlement \textit{dormant} if the value at the origout is less than $a$. Such settlements are important, as after the origin fires at them, the origout does not fire back. The proof of Lemma~\ref{lemma:transition} also implies the following corollary.

\begin{corollary}
After the origin fires once, during the settling period, the origout fires 0 or 1 times, depending on whether or not the right part is a dormant settlement.
\end{corollary}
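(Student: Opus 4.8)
The plan is to read the conclusion directly off the case analysis in the proof of Lemma~\ref{lemma:transition}, tracking only what happens at the origout (vertex $1$). Write the settlement reached before the origin fires as $\xi_k = .s_1 s_2 \ldots s_\ell$. By definition this settlement is dormant exactly when $s_1 < a$, which is precisely the condition $j = 1$ in the notation of Lemma~\ref{lemma:transition}, where $j$ is the smallest index with $s_i < a$. So the dichotomy "dormant versus not dormant" coincides with the dichotomy "$j = 1$ versus $j > 1$" already used to state the transition rule.

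First I would dispose of the dormant case $j = 1$. Here the origin firing raises the origout from $s_1$ to $s_1 + b$. Since $s_1 < a$, we have $s_1 + b < a + b$, so the origout sits below the firing threshold and, by the lemma, no vertex on the right fires at all. Hence the origout fires $0$ times, as claimed.

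Next I would handle the non-dormant case $j > 1$, that is $s_1 \geq a$. The cascade displayed in the proof of Lemma~\ref{lemma:transition} shows that vertices $1$ through $j-1$ each fire exactly once, in left-to-right order; in particular the origout fires at least once, being the first vertex to reach the threshold after the origin fires. The one thing that genuinely needs checking is that the origout does not fire a second time. After the origout fires it holds $s_1 - a$ chips, and when vertex $2$ subsequently fires it returns $a$ chips to the origout, restoring its count to $s_1$. But $\xi_k$ is a settlement, so $s_1 < a + b$, and therefore the origout never again reaches the firing threshold. Thus the origout fires exactly once.

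Combining the two cases yields the stated dichotomy. I expect the only subtlety — which I would flag as a point to verify rather than a true obstacle — to be confirming that the $a$ chips returned to the origout when vertex $2$ fires do not retrigger it; this is immediate from the settlement property $s_1 < a+b$, so the whole argument reduces to inspecting the first step of the cascade in Lemma~\ref{lemma:transition}.
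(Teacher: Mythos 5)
Your proposal is correct and follows exactly the paper's route: the paper proves this corollary by simply noting that it is implied by the case analysis ($j=1$ versus $j>1$) in the proof of Lemma~\ref{lemma:transition}, which is precisely the inspection you carry out. Your extra verification that the origout cannot fire a second time (its count is restored to $s_1 < a+b$ when vertex $2$ fires, and the cascade then moves rightward) is exactly the content implicit in the displayed firing sequence of that proof.
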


\section{Settlements}\label{sec:settlements}

The value $\lceil \frac{a}{b-a}\rceil$ plays an important role in the coming formulae and proofs. We denote it by $c$. 

\begin{lemma}
If for a positive integer $m$, we have $m \leq c$, then 
\[mb -(m-1)a < a+b \leq (m+1)b - (m-1)a.\]
Also $c(b-a) < a+b$.
\end{lemma}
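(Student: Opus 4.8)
The plan is to collapse all three inequalities onto the single quantity $m(b-a)$, which makes the role of $c = \lceil \frac{a}{b-a}\rceil$ transparent. First I would record the two algebraic identities $mb - (m-1)a = m(b-a) + a$ and $(m+1)b - (m-1)a = m(b-a) + a + b$. Substituting these into the chain $mb - (m-1)a < a+b \le (m+1)b - (m-1)a$ and subtracting the common term $a+b$ (resp. $a$) turns it into the equivalent pair of inequalities $m(b-a) < b$ and $0 \le m(b-a)$.

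The right-hand inequality $0 \le m(b-a)$ is then immediate: since $m \ge 1$ and we are in the coprime case with $a < b$, so $b - a > 0$, we in fact have $m(b-a) > 0$. Note this holds for every positive $m$ and does not use the hypothesis $m \le c$; the entire content sits in the left-hand inequality. There the hypothesis enters through the defining property of the ceiling, $c - 1 < \frac{a}{b-a}$, which I would rearrange to $c < \frac{a}{b-a} + 1 = \frac{b}{b-a}$ and, multiplying by the positive number $b - a$, to $c(b-a) < b$. For any positive integer $m \le c$ this yields $m(b-a) \le c(b-a) < b$, which is exactly the left-hand inequality. The closing assertion $c(b-a) < a+b$ is then a free consequence, since $c(b-a) < b < a+b$ using $a > 0$.

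Since every step is elementary, the only point requiring care is the bookkeeping of strict versus non-strict inequalities. The key fact $c(b-a) < b$ must come out strict, and I would obtain this directly from the strict bound $c - 1 < \frac{a}{b-a}$ that is built into the ceiling (no appeal to coprimality is needed for strictness here). This is the one place where a careless estimate would degrade the bound to $\le$, so it is the step I would write out most explicitly.
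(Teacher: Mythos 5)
Your proof is correct and takes essentially the same route as the paper: both reduce the chain of inequalities to elementary algebra on $m(b-a)$ (the paper writes it as $(m-1)(b-a)$) and invoke the ceiling bound $c-1 < \frac{a}{b-a}$ for the strict inequality. Your version is marginally tidier in that it isolates the single key fact $c(b-a) < b$, which immediately gives the final assertion $c(b-a) < a+b$, but the underlying argument is the same.
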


\begin{proof}
First $(m+1)b - (m-1)a \geq 2b > a+b$ independently of the value of positive integer $m$. If $m \leq c$, then $m -1 < \frac{a}{b-a}$. Hence, $mb -(m-1)a = (m-1)(b-a) + b < a+b$. The final inequality follows from the first inequality: $c(b-a) < cb - (c-1)a < a+b$.
\end{proof}

This means that for $m \leq c$, the values $mb -(m-1)a$ and $mb -ma$ do not fire, while the value $(m+1)b - (m-1)a$ fires.

Now we calculate explicitly the sequence $\xi_k$ of settlements. For starters, we have
\[\xi_0 = .0, \quad \xi_1 = .b, \quad \xi_2 = .(b-a)b.\]

For the following lemma we denote triangular numbers as $T_i$, where $T_i = \frac{i(i+1)}{2}$, and tetrahedral numbers as $Te_i$, where
\[Te_{n}=\sum _{k=1}^{n}T_{k}=\sum _{k=1}^{n}{\frac {k(k+1)}{2}}=\sum _{k=1}^{n}\left(\sum _{i=1}^{k}i\right) = \binom{n+2}{3}.\]

\begin{lemma}\label{lemma:tetrahedral}
After the origin fires $Te_c+1$ times, we get the settlement
\[\xi_{Te_c+1} = .(cb-ca)(cb - (c-1)a)\ldots (ib - (i-1)a) \ldots(3b-2a)(2b-a)b,\]
where $cb - ca > a$.
Moreover the number of dormant settlements $\xi_i$, where $i < Te_c+1$ is $c$.
\end{lemma}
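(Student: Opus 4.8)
The plan is to prove both assertions by following the settling process through a sequence of \emph{staircase} settlements, combining a direct cascade analysis (via the transition rule of Lemma~\ref{lemma:transition}) with the base-$\frac{b}{a}$ evaluation invariant. For $0 \le m \le c$ I define the depth-$m$ staircase
\[\Sigma_m = .(m(b-a))(mb-(m-1)a)((m-1)b-(m-2)a)\cdots(2b-a)b,\]
with $\Sigma_0 = .$ the empty right part. The central claim is that the settling sequence passes through $\Sigma_0, \Sigma_1, \ldots, \Sigma_c$ in this order, that $\Sigma_m$ is dormant exactly when $m < c$, and that $\Sigma_c$ is the settlement in the statement. I would prove this by induction on $m$, so the heart of the matter is to understand the block of transitions carrying $\Sigma_{m-1}$ to $\Sigma_m$.

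Before the induction I record how the base-$\frac{b}{a}$ value of the right part evolves. By Proposition~\ref{prop:valueAndFiring} the quantity $S_{\sigma^R}(\frac{b}{a}) = a(f_0(\sigma)-f_1(\sigma))$ changes only when the origin or the origout fires: each origin firing raises it by $a$, each origout firing lowers it by $a$. Since one settling step consists of a single origin firing followed by settling, during which the origout fires once if the settlement is non-dormant and not at all if it is dormant, the net change in $S_{\xi_k^R}(\frac{b}{a})$ is $0$ at a non-dormant settlement and $+a$ at a dormant one. Hence $S_{\xi_k^R}(\frac{b}{a}) = a\cdot(\text{number of dormant settlements among } \xi_0, \ldots, \xi_{k-1})$, which both fixes the target value $ca$ and gives an independent cross-check on the count.

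The crux is the inductive step. Because the leading digit of $\Sigma_{m-1}$ is $(m-1)(b-a) < a$ (as $(c-1)(b-a) < a$ by definition of $c$), the milestone $\Sigma_{m-1}$ is dormant, so the first transition has $j=1$ and merely adds $b$ to that digit, turning $\Sigma_{m-1}$ into the full descending ramp with digits $ib-(i-1)a$ for $i=m,m-1,\ldots,1$ read left to right. Every ramp digit lies in $[b,a+b)$ by the inequalities of the preceding lemma (using $m\le c$), so it exceeds $a$ and does not fire; thus the remaining transitions all have $j\ge 2$. I would then show by a short internal induction that the firing index $j$ runs through $m+1,m,\ldots,2$: at the $t$-th such step the first digit below $a$ sits at position $m-t+1$ (holding $(t-1)(b-a)<a$), and the step subtracts $a$ there and adds $b$ one place to its right, leaving $t(b-a)$ at position $m-t+1$. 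After these $m$ steps the ramp has been shifted one place right and the new leading digit $m(b-a)$ assembled, producing exactly $\Sigma_m$. The main obstacle is precisely this bookkeeping: verifying at each step which digit is the first one below $a$ (hence the claimed descent of $j$) and that no intermediate digit reaches $a+b$, which rests on $t(b-a)<a$ for $t\le c-1$, on $(t-1)(b-a)+b<a+b$ for $t\le c$, and on $b-a<a$ when $c\ge 2$ — all consequences of $c=\lceil\frac{a}{b-a}\rceil$ and the bounds of the previous lemma.

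Finally I assemble the conclusion. All digits of $\Sigma_c$ are below $a+b$ by the same bounds, so $\Sigma_c$ is a genuine settlement, and its leading digit satisfies $a\le c(b-a)<a+b$; in particular $\Sigma_c$ is not dormant, and a depth-$(c+1)$ staircase is impossible because its second digit $(c+1)b-ca\ge a+b$ (equivalently $c(b-a)\ge a$) would fire. Within each block only the starting milestone is dormant, so the dormant settlements encountered are exactly $\Sigma_0,\ldots,\Sigma_{c-1}$, giving the count $c$, consistent with $S_{\Sigma_c^R}(\frac{b}{a})=ca$ from the invariant. The number of origin firings needed to reach $\Sigma_c$ is the sum of the block lengths $m+1$, while the number of individual settling firings in the $m$-th block is $0+1+\cdots+m=T_m$, so the total number of right-part firings during the whole process is $\sum_{m=1}^{c}T_m=Te_c$, which is the source of the tetrahedral count in the statement.
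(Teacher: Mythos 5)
Your argument is, in substance, the same as the paper's: an induction over the staircase milestones (your $\Sigma_m$), each block unwound by repeated application of Lemma~\ref{lemma:transition}, with the dormant settlements identified as exactly the milestones $\Sigma_0,\dots,\Sigma_{c-1}$. The only genuinely new ingredient is your use of Proposition~\ref{prop:valueAndFiring} to get $S_{\xi_k}\left(\frac{b}{a}\right)=a\cdot\#\{\text{dormant settlements before }\xi_k\}$ as a cross-check on the count; that is sound and a nice addition, though not necessary.

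The substantive difference is the firing count, and there you are right while the statement you were asked to prove is wrong. Your block analysis gives $m+1$ origin firings to pass from $\Sigma_{m-1}$ to $\Sigma_m$ (one dormant step, then $m$ cascade steps with $j=m+1,m,\dots,2$), so $\Sigma_c$ is reached after $\sum_{m=1}^{c}(m+1)=T_c+c$ origin firings: $\Sigma_c=\xi_{T_c+c}$, not $\xi_{Te_c+1}$. The two indices coincide exactly for $c\le 2$ (namely $2=Te_1+1$ and $5=Te_2+1$), which is why the paper's 1-2 and 2-3 data never expose the discrepancy, but they diverge from $c=3$ on. Concretely, for 3-4 firing ($c=3$) iterating Lemma~\ref{lemma:transition} gives $\xi_9=.3654$, which is the claimed staircase, whereas $Te_3+1=11$ and $\xi_{11}=.36254$. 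The paper's proof performs the same transitions you do but never counts them (``after several more rounds''), which is how the faulty index survived; and your closing observation is exactly the right diagnosis, since $Te_c$ equals the total number of firings strictly to the right of the origin (block $m$ contributes $0+m+(m-1)+\cdots+1=T_m$), not the number of origin firings. You should state this as a correction, not a reconciliation: your proof establishes $\xi_{T_c+c}=.(cb-ca)(cb-(c-1)a)\ldots(2b-a)b$, the displayed formula in the lemma is false for $c\ge3$, and the corollary and theorem following it in Section~\ref{sec:settlements} inherit the wrong offset. (The ``moreover'' clause happens to remain true, because no dormant settlement ever occurs after $\Sigma_{c-1}$, so any cutoff beyond its index yields the count $c$.)

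Two small repairs to your write-up. First, the bookkeeping sentence in your internal induction is garbled: at the $t$-th cascade step the first digit below $a$ sits at position $m-t+2$ (value $(t-1)(b-a)$, the $t=1$ case being the virtual $0$ beyond the last digit), and Lemma~\ref{lemma:transition} subtracts $a$ from the digit to its \emph{left}, at position $m-t+1$, while adding $b$ to the low digit itself; as written you place the low digit at $m-t+1$ and subtract ``there,'' which contradicts your own (correct) claim that $j$ runs through $m+1,m,\dots,2$. The intermediate states you exhibit are nevertheless the right ones, so this is a wording bug, not a mathematical one. Second, your inequality $a\le c(b-a)$ is the correct one: the paper's strict claim $cb-ca>a$ fails whenever $b=a+1$ (then $c(b-a)=a$), another small slip your version silently fixes.
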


\begin{proof}
The proof is by induction. If $c = 1$, then $\frac{a}{b-a} < 1$, that is $b- a > a$. We see that $\xi_2 = .(b-a)b$, where $b-a > a$ as required. And we have only one dormant settlement, namely $\xi_0 = .0$, in the range.

If $c >1$, then $b-a < a$ and 
\[\xi_3 = .(2b-a)b, \quad \xi_4 = .(2b-a)(b-a)b, \quad \xi_5 = .(2b-2a)(2b-a)b.\]

If $c =2$, then $\xi_5$ matches the formula in the statement of the lemma. Also, one more dormant settlement, namely $\xi_2 = .(b-a)b$, is added to the range.

Now we use induction in $k$, for $k < c$. By our assumption
\[\xi_{Te_k+1} = .(kb-ka)(kb - (k-1)a)\ldots (ib - (i-1)a) \ldots(3b-2a)(2b-a)b.\]
Also, we assume that the number of dormant settlements in the range $[1,Te_k]$ is $k-1$. 

As $k < c$, we have $kb-ka < a$, thus the next settlement is 
\[.((k+1)b-ka)(kb - (k-1)a)\ldots (ib - (i-1)a) \ldots(3b-2a)(2b-a)b.\]
Notice that all the digits in this settlement are greater than $a$. Thus, applying Lemma~\ref{lemma:transition}, the next settlement is
\[.((k+1)b-ka)(kb - (k-1)a)\ldots (ib - (i-1)a) \ldots(3b-2a)(2b-a)(b-a)b.\]
Now the first digit that is less than $a$ is $b-a$, thus, again, by Lemma~\ref{lemma:transition}, the next settlement is
\[.((k+1)b-ka)(kb - (k-1)a)\ldots (ib - (i-1)a) \ldots(3b-2a)(2b-2a)(2b-a)b.\]
After several more rounds we get (the term of the form $(mb-ma)$ keeps moving to the left)
\begin{multline*}
.((k+1)b-ka)\ldots \\
((m+1)b - ma)(mb-ma)(mb - (m-1)a) \ldots \\
 (3b-2a)(2b-a)b,
\end{multline*}
until we reach
\[.((k+1)b-(k+1)a)((k+1)b - ka)\ldots(3b-2a)(2b-a)b,\]
as desired.
Notice that we added one new dormant settlement, namely $\xi_{Te_k+1}$, to the range in question.
\end{proof}

Let us denote the string $(cb - (c-1)a)\ldots (ib - (i-1)a) \ldots(3b-2a)(2b-a)b$ as $\delta_0$, the string $(cb-(c-1)a)((c-1)b - (c-2)a)\ldots (3b-2a)(2b-a)(b-a)b$ as $\delta_1$ and so on: the string 
\begin{multline*}(cb-(c-1)a)((c-1)b - (c-2)a)\ldots \\
((m+1)b - ma)(mb-ma)(mb - (m-1)a) \ldots \\
(3b-2a)(2b-a)b
\end{multline*}
as $\delta_m$, and lastly the string
\[(cb-(c-1)a)((c-1)b - (c-1)a)((c-1)b - (c-2)a) \ldots (3b-2a)(2b-a)b\]
as $\delta_{c-1}$.

Recall, that we denoted a word that consists $k$ instances of a digit $d$ as $d_k$, Now we are ready to describe the eventual behavior of the settlements.

\begin{theorem}
The following formula describes the settlements starting from index ${Te_c+1}$. If $n = {Te_c+1} + pc + q$, where $q <c$, then
\[\xi_n = .(cb - ca)_{p+1}\delta_q.\]
\end{theorem}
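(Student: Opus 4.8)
The plan is to prove the formula by induction on $n \geq Te_c+1$, using Lemma~\ref{lemma:transition} to pass from $\xi_n$ to $\xi_{n+1}$ at each step. The base case $n = Te_c+1$ (so $p=0$, $q=0$) is exactly the content of Lemma~\ref{lemma:tetrahedral}, since $\xi_{Te_c+1} = .(cb-ca)_1\delta_0$, and the statement $cb-ca>a$ there will be the key numerical fact I keep reusing. The inductive step splits into two cases according to whether $q < c-1$ or $q = c-1$, because these correspond to two structurally different transitions: within a block of $\delta$'s we are growing the $(mb-ma)$ term and sliding it leftward, whereas at the boundary $q=c-1 \to q=0$ we must append a fresh copy of the digit $cb-ca$ at the far left and reset the tail to $\delta_0$.

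First I would record the arithmetic I need. From the Lemma preceding Lemma~\ref{lemma:tetrahedral}, for $m \leq c$ the value $mb-(m-1)a$ does not fire (it is $<a+b$) while $(m+1)b-(m-1)a$ does fire, and $c(b-a)<a+b$; I will also use repeatedly that $mb-ma = m(b-a)$ is $<a$ precisely when $m<c$ and is $\geq a$ when $m=c$ (indeed $cb-ca>a$ from Lemma~\ref{lemma:tetrahedral}). These facts let me locate, for any $\delta_q$-type string, the smallest index $j$ with $s_j<a$, which is exactly what Lemma~\ref{lemma:transition} requires: in $.(cb-ca)_{p+1}\delta_q$ every leading $cb-ca$ and every $\delta$-digit of the form $ib-(i-1)a$ is $\geq a$, so the first sub-$a$ digit is the $mb-ma$ term sitting inside $\delta_q$ (for $q\geq 1$) or, for $q=0$, there is no such digit until we reach the trailing $b$ region, which is where the new carry originates.

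For the main step, assume $\xi_n = .(cb-ca)_{p+1}\delta_q$. I would apply Lemma~\ref{lemma:transition} a controlled number of times, mimicking exactly the sequence of transitions already displayed inside the proof of Lemma~\ref{lemma:tetrahedral}: one firing turns the leading region's first dormant digit into a firing digit, and subsequent firings walk the $m(b-a)$ term one position to the left, incrementing $m$, until it reaches the position that upgrades $\delta_q$ to $\delta_{q+1}$. Concretely, when $q<c-1$ these firings transform $.(cb-ca)_{p+1}\delta_q$ into $.(cb-ca)_{p+1}\delta_{q+1}$, matching $n+1 = Te_c+1+pc+(q+1)$. When $q=c-1$, the same mechanism pushes the growing term all the way to the left edge of the block, where it merges to produce one additional leading $cb-ca$, leaving behind $\delta_0$; this gives $.(cb-ca)_{p+2}\delta_0$, matching $n+1 = Te_c+1+(p+1)c+0$. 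In both cases I must verify the index count $n+1$ is consistent, which is the bookkeeping $pc+q \mapsto pc+(q+1)$ or $pc+(c-1)\mapsto (p+1)c+0$.

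The main obstacle I expect is the boundary case $q=c-1$: I have to check that when the $m(b-a)$ term reaches $m=c$ its value $cb-ca$ becomes $\geq a$ (so it stops being the first sub-$a$ digit and instead joins the leading block as a genuine $cb-ca$ digit rather than triggering a further firing), and simultaneously that the residual tail is precisely $\delta_0$ and not some off-by-one variant. This is delicate because it is exactly the transition where the qualitative behavior changes, and it relies crucially on the inequality $cb-ca>a$ together with $cb-ca<a+b$ (i.e.\ $c(b-a)<b<a+b$), ensuring the new leading digit neither fires nor is itself dormant. I would handle this by carefully tracking the explicit digit strings through each application of Lemma~\ref{lemma:transition}, paralleling the displayed computation in the proof of Lemma~\ref{lemma:tetrahedral}, and confirming that the firing cascade halts exactly when every digit is in the non-firing range $[0,a+b)$.
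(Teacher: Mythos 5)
Your proposal is correct and takes essentially the same route as the paper: the paper's own proof is the single remark that the theorem ``follows from the fact that $cb-ca > a$ and the calculations made in the previous proof,'' and your induction --- base case from Lemma~\ref{lemma:tetrahedral}, one application of Lemma~\ref{lemma:transition} per index, split into the cases $q < c-1$ (the sub-$a$ term $m(b-a)$ inside $\delta_q$ advances, or for $q=0$ a new trailing $b$ is appended) and $q = c-1$ (the term reaches value $c(b-a) \geq a$ and merges into the leading block, resetting the tail to $\delta_0$) --- is exactly those calculations written out. The only caution is a slip of wording in your main step (in this regime the leading block is never dormant, so it is the walking term inside $\delta_q$, not a ``leading dormant digit,'' that the single origin firing acts on), but your second paragraph identifies the first sub-$a$ digit correctly and your concrete case conclusions and index bookkeeping are right.
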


\begin{proof}
The proof follows from the fact that $cb - ca >a$ and the calculations made in the previous proof.
\end{proof}

\begin{corollary}
The highest index of the dormant settlements is $Te_{c-1}+1$.
\end{corollary}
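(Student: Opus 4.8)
The plan is to reduce the whole statement to tracking a single quantity: the value $s_1$ at the origout of each settlement $\xi_k$, since by definition $\xi_k$ is dormant precisely when $s_1 < a$. So I would first catalogue every dormant settlement and then read off the largest index among them. For the settlements of index at most $Te_c$, I would extract the origout values directly from the proof of Lemma~\ref{lemma:tetrahedral}. That induction introduces the dormant settlements one per stage: at stage $k$ the new dormant settlement is $\xi_{Te_k+1}$, whose leftmost digit is $kb-ka = k(b-a)$, for $k=1,\dots,c-1$; together with $\xi_0 = .0$ (origout $0$) these account for exactly the $c$ dormant settlements guaranteed by that lemma. Each is genuinely dormant because $c-1 < \tfrac{a}{b-a}$ forces $k(b-a) \le (c-1)(b-a) < a$ for every $k \le c-1$. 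Among these indices the largest is plainly $Te_{c-1}+1$, so it only remains to show that no settlement of larger index is dormant.

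I would handle the two remaining ranges separately. For indices strictly between $Te_{c-1}+1$ and $Te_c+1$, the relevant states are exactly the intermediate settlements produced in the final ($k=c-1$) induction step of Lemma~\ref{lemma:tetrahedral}; throughout that step the leftmost digit is constant and equal to $cb-(c-1)a = (c-1)(b-a)+b > a$, so none of these is dormant. For indices at least $Te_c+1$ I would invoke the preceding theorem, which gives $\xi_n = .(cb-ca)_{p+1}\delta_q$ whenever $n = Te_c+1+pc+q$ with $q < c$; its origout is $cb-ca = c(b-a)$, and $c \ge \tfrac{a}{b-a}$ yields $c(b-a) \ge a$, so again no such settlement is dormant. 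Combining the three ranges shows that the highest index of a dormant settlement is $Te_{c-1}+1$.

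Once the origout values are in hand the argument is routine bookkeeping; the one point that needs care is that ruling out the range $[Te_{c-1}+2,\,Te_c]$ cannot be read off from the endpoints of Lemma~\ref{lemma:tetrahedral} alone. It requires observing that the leftmost digit is unchanged across all intermediate settlements of the last induction step, because the ``moving'' term $(mb-ma)$ travels leftward without touching the origout until the final transition. I would also flag the degenerate case $c=1$: there the list $\xi_{Te_1+1},\dots,\xi_{Te_{c-1}+1}$ is empty and $\xi_0$ is the unique, hence highest-index, dormant settlement, so the formula $Te_{c-1}+1$ should be read with that convention in mind.
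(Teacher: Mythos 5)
Your proof is correct and is essentially the argument the paper intends: the corollary appears without a proof of its own, as an immediate consequence of Lemma~\ref{lemma:tetrahedral} (whose induction exhibits the dormant settlements as $\xi_0, \xi_{Te_1+1}, \ldots, \xi_{Te_{c-1}+1}$, all of index at most $Te_{c-1}+1$, with nothing else dormant below index $Te_c+1$) together with the preceding theorem (whose formula shows every settlement of index at least $Te_c+1$ has origout $c(b-a) \geq a$, hence is not dormant). Your two refinements are both warranted: the non-strict bound $c(b-a) \geq a$ is what actually holds (the paper's claim $cb-ca > a$ fails whenever $b-a$ divides $a$, e.g.\ $a=2$, $b=3$, where $cb-ca = a$), and for $c=1$ the stated answer $Te_0+1 = 1$ is indeed off by one, since in that case the unique dormant settlement is $\xi_0$.
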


\textbf{Example.} Consider the 3-4 firing. In this case $c = \lceil \frac{3}{4-3}\rceil = 3.$ That means, the repeated digit is $cb-ca = 3$. The digits on the very right cycle through $\delta_0 = (3b-2a)(2b-a)b$, $\delta_1 = (3b-2a)(2b-a)(b-a)b$, and $\delta_2 = (3b-2a)(2b-2a)(2b-a)b$. Explicitly, they cycle through $\delta_0 = 654$, $\delta_1 = 6514$, and $\delta_2 = 6254$.

\textbf{Example.} Suppose $(b-a) \geq a$, then $c=1$ and the sequence of settlements is described by
\[\xi_k = .(b-a)_{k-1}b,\]
for $k > 0$.

Table~\ref{table:origout} shows eventual origout values in the final states for large values of $n$ for some values of $a$ and $b$.

\begin{table}[h]
\begin{center}
  \begin{tabular}{| c | c | c |}
\hline
  & $c$ & origout value \\
\hline
$a \leq b/2$  & $1$ & $b-a$ \\
$b=(a+1)$ & $a$ & $a$ \\
$a =2k-1 = b-2$ & $k$ & $2k = a+1$ \\
\hline
  \end{tabular}
\end{center}
  \caption{Eventual origout values.}\label{table:origout}
\end{table}

We call the number of chips $B$ \textit{balanced}, if $B$ is the smallest integer such that $\phi(B)^R = \xi_k$, for $k > Te_{c-1}+1$. Starting from $B$ the origout value of the final state is always at least $a$, that is, every time the origin fires, the origout fires back.

\section{The value of the final left/right part in base $\frac{b}{a}$}\label{sec:value}

In this section we discuss $(\phi(n)^L)_{\frac{b}{a}}$ and $(\phi(n)^R)_{\frac{b}{a}}$, that is, the values of the left and right parts of the final state evaluated in base $\frac{b}{a}$.

Let us denote the total number of firings of the origin needed to get to the final state for $n$ chips as $f_0$ and for the origout as $f_1$. We denote the corresponding values for $n+1$ as $f_0'$ and $f_1'$. Recall that by Corollary~\ref{cor:uniequelydefined} these numbers are well-defined. We have 
\[f_0' \geq f_0 \quad \textrm{ and } \quad f_1' \geq f_1.\]
Indeed, we can start with $n$ chips, get to the final state, then add 1 chip to the origin. After this we might need more firings.

\begin{lemma}\label{lemma:firingdifference}
We have
\[f_0' - f_1' \geq f_0 - f_1.\]
Moreover, for any $n \geq B$
\[f_0' - f_1' =c = \lceil \frac{a}{b-a}\rceil.\]
\end{lemma}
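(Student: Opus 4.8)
The plan is to translate the entire statement into the base-$\frac{b}{a}$ value of the right part and then exploit the monotonicity of the settlement sequence. By Proposition~\ref{prop:valueAndFiring} we have $(\phi(n)^R)_{\frac{b}{a}} = a(f_0 - f_1)$ and $(\phi(n+1)^R)_{\frac{b}{a}} = a(f_0' - f_1')$, so both claims are really statements about the quantity $(\phi(n)^R)_{\frac{b}{a}}$. Since the paper already records that $\phi(n)^R = \xi_i$ and $\phi(n+1)^R = \xi_j$ for settlement indices $j \geq i$, it suffices to understand how $(\xi_k)_{\frac{b}{a}}$ behaves as $k$ grows.

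First I would compute the increment $(\xi_{k+1})_{\frac{b}{a}} - (\xi_k)_{\frac{b}{a}}$. Passing from $\xi_k$ to $\xi_{k+1}$ consists of exactly one origin firing together with the settling firings, and by the corollary to Lemma~\ref{lemma:transition} the origout fires once if $\xi_k$ is not dormant and not at all if $\xi_k$ is dormant, while every other settling firing is internal to the right part. Using Proposition~\ref{prop:valueAndFiring} (an origin firing raises the right-part value by $a$, an origout firing lowers it by $a$, and an internal firing leaves it fixed), the increment equals $a$ when $\xi_k$ is dormant and $0$ otherwise. In particular $(\xi_k)_{\frac{b}{a}}$ is nondecreasing in $k$, which gives the first claim at once: $a(f_0' - f_1') = (\xi_j)_{\frac{b}{a}} \geq (\xi_i)_{\frac{b}{a}} = a(f_0 - f_1)$, hence $f_0' - f_1' \geq f_0 - f_1$.

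For the stabilization I would invoke the two bookkeeping facts from Section~\ref{sec:settlements}: there are exactly $c$ dormant settlements (Lemma~\ref{lemma:tetrahedral}, together with the corollary bounding dormant indices), and the largest index of a dormant settlement is $Te_{c-1}+1$. Summing the increments starting from $\xi_0$, whose value is $0$, shows $(\xi_k)_{\frac{b}{a}} = ca$ for every $k > Te_{c-1}+1$, since by then all $c$ positive increments have already occurred and no later settlement is dormant. By the definition of the balanced number $B$, whenever $n \geq B$ both $\phi(n)^R$ and $\phi(n+1)^R$ are settlements of index exceeding $Te_{c-1}+1$, so $a(f_0 - f_1) = a(f_0' - f_1') = ca$ and therefore $f_0' - f_1' = c = \lceil \frac{a}{b-a}\rceil$.

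The only real obstacle is the careful accounting in the increment computation: one must verify that in the transition $\xi_k \to \xi_{k+1}$ the origin fires exactly once and that the sole value-changing settling firing is the (at most one) origout firing, so that the clean dichotomy ``$+a$ if dormant, $0$ otherwise'' is genuinely correct. Everything after that is routine summation against the already-established counts of dormant settlements, so I expect no further difficulty.
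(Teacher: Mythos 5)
Your proof is correct and follows essentially the same route as the paper's: both rest on the dichotomy that an origin firing is answered by exactly one origout firing unless the right part is a dormant settlement, on Lemma~\ref{lemma:tetrahedral}'s count of $c$ dormant settlements (all of index at most $Te_{c-1}+1$), and on the definition of the balanced number $B$. Your only real departure is cosmetic: you track the quantity $a(f_0-f_1) = (\phi(n)^R)_{\frac{b}{a}}$ via Proposition~\ref{prop:valueAndFiring} rather than the firing-count difference itself, which rescales the paper's bookkeeping by a factor of $a$ (and, in passing, makes the first inequality --- which the paper's proof leaves largely implicit --- fully explicit through the monotonicity of the settlement index).
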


\begin{proof}
As follows from the previous section, when the origin fires the origout fires back once if the settlement on the right is not dormant, and does not fire back if it is dormant. For $n \geq B$, the right part is not a dormant settlement. That means that whenever the origin fires, the origout fires back once. Hence, the difference between these two firing numbers stays constant. From Lemma~\ref{lemma:tetrahedral} we know that there are $c$ dormant settlements among the first $Te_{c-1}+2$. That means this constant is exactly $c$.
\end{proof}

We now evaluate the sides in base $\frac{b}{a}$ for  $n$ that is large enough. But first, recall the definition of the ssate polynomials from the beginning of Section~\ref{sec:series}.

\begin{theorem}\label{thm:stable}
For any $n \geq B$, we have
\[S_{\phi(n)^R}\left( \frac{b}{a} \right) = ac\]
and
\[S_{\phi(n)^L}\left( \frac{b}{a} \right) = n -ac,\]
where $c = \lceil \frac{a}{b-a}\rceil$.
\end{theorem}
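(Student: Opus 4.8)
The plan is to combine Proposition~\ref{prop:valueAndFiring} with the stabilization of the firing difference established in Lemma~\ref{lemma:firingdifference}. Recall that Proposition~\ref{prop:valueAndFiring} gives, for the final state $\phi(n)$,
\[
S_{\phi(n)^R}\left(\tfrac{b}{a}\right) = a\bigl(f_0(\phi(n)) - f_1(\phi(n))\bigr)
\quad\textrm{and}\quad
S_{\phi(n)^L}\left(\tfrac{b}{a}\right) = n - a\bigl(f_0(\phi(n)) - f_1(\phi(n))\bigr),
\]
where $f_0 = f_0(\phi(n))$ and $f_1 = f_1(\phi(n))$ are the total numbers of firings of the origin and origout, respectively. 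By Corollary~\ref{cor:uniequelydefined} these are well-defined since $a \neq b$. The entire theorem therefore reduces to showing that $f_0 - f_1 = c$ for every $n \geq B$.

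First I would record the two consequences of Proposition~\ref{prop:valueAndFiring} above, so that the target $S_{\phi(n)^R}(\tfrac{b}{a}) = ac$ is seen to be equivalent to the single identity $f_0 - f_1 = c$, and likewise $S_{\phi(n)^L}(\tfrac{b}{a}) = n - ac$ follows immediately from the complementary formula once this identity is known. Second, I would invoke Lemma~\ref{lemma:firingdifference} directly: it asserts precisely that $f_0' - f_1' = c$ for all $n \geq B$, where the primed quantities are exactly $f_0(\phi(n))$ and $f_1(\phi(n))$ in the notation of that section. Substituting $f_0 - f_1 = c$ into the two displayed formulas yields both claimed evaluations at once.

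Thus the proof is essentially a two-line deduction, and the only real content has already been deposited in the earlier machinery. The substantive work is carried by Lemma~\ref{lemma:firingdifference}, whose proof rests on the settlement analysis: for $n \geq B$ the right part is never a dormant settlement, so the origout fires back exactly once each time the origin fires, pinning the difference $f_0 - f_1$ at the constant number of dormant settlements, which Lemma~\ref{lemma:tetrahedral} counts as $c$. Consequently, the main obstacle is not in this theorem itself but in ensuring that the hypothesis $n \geq B$ genuinely guarantees non-dormancy throughout the firing process leading to $\phi(n)$; I would want to confirm that $B$ (the smallest $n$ with $\phi(n)^R = \xi_k$ for $k > Te_{c-1}+1$) really does exclude every dormant settlement from appearing as the terminal right part, which is exactly the content flagged in the definition of $B$ and in Lemma~\ref{lemma:firingdifference}. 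Given those results, the theorem follows by direct substitution, so I would present it as a short corollary-style argument rather than a fresh computation.
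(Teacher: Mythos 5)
Your proposal is correct and follows essentially the same route as the paper's own proof: both reduce the theorem via Proposition~\ref{prop:valueAndFiring} to the single identity $f_0 - f_1 = c$ and then invoke Lemma~\ref{lemma:firingdifference} to supply that identity for $n \geq B$. The extra caution you raise about $B$ excluding dormant settlements is precisely the content already established in Lemma~\ref{lemma:firingdifference} (via Lemma~\ref{lemma:tetrahedral}), so nothing further is needed.
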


\begin{proof}
From Proposition~\ref{prop:valueAndFiring} we know that $(\phi(n)^R)_{\frac{b}{a}} = S_{\phi(n)^R}\left( \frac{a}{b} \right) = a(f_0 - f_1)$ and $(\phi(n)^L)_{\frac{b}{a}} =S_{\phi(n+1)^L}\left( \frac{a}{b} \right) = n -a(f_0 - f_1)$. The result follows from Lemma~\ref{lemma:firingdifference}.
\end{proof}

\textbf{Example.} Consider the 3-4 firing. In this case $c = \lceil \frac{3}{4-3}\rceil = 3.$ That means, the repeated digit in the right part is $cb-ca = 3$. The value of the right part evaluated in base $\frac{4}{3}$ eventually becomes $ac = 9$.

Table~\ref{table:valuerightside} shows $(\phi(n)^R)_{\frac{b}{a}}$ for large values of $n$ for some values of $a$ and $b$.

\begin{table}[h]
\begin{center}
  \begin{tabular}{| c | c | c |}
\hline
  & $c$ &  the eventual value of $(\phi(n)^R)_{\frac{b}{a}}$\\
\hline
$a \leq b/2$  & $1$ &  $a$ \\
$b=(a+1)$ & $a$ &  $a^2$ \\
$a =2k-1 = b-2$ &  $2k = a+1$ & $(2k-1)k$ \\
\hline
  \end{tabular}
\end{center}
  \caption{The eventual value of $(\phi(n)^R)_{\frac{b}{a}}$.}\label{table:valuerightside}
\end{table}

\section{The Left Part}\label{sec:leftside}

We showed in the previous section that starting from $B$ chips whenever the origin fires, the origout fires too. We can say: when the origin fires it sends $a$ chips to the left and receives $a$ chips. In other words: as soon as the origin has $a+b$ chips, $a+b$ chips explode, and $a$ chips move to the left and another set of $a$ chips stays in place. Yet in other words: as soon as the origin has $a+b$ chips, $b$ chips explode, and $a$ chips are added to the left. This is similar to the behavior of the fractional base $\frac{b}{a}$ except in the latter case we need only $b$ chips in order to explode. We call the new game the \textit{elevated} $a$-$b$ chip-firing game.

\begin{lemma}\label{lemma:elevated}
Consider $n_0 > B$, where $\phi(n_0)^L = s_{-k}\ldots s_{-2}s_{-1}s_0$. Suppose $i$ is the smallest integer such that $s_{-i} <a$. Then, for any $n > n_0$, the transition from $\phi(n)^L$ to $\phi(n+1)^L$ on the vertices in the range $[1-i,0]$ is the same as adding one chip at the origin in the elevated $a$-$b$ chip-firing game.
\end{lemma}

\begin{proof}
We can represent the last $i$ digits of $\phi(n)^L$ as $Xd_kd_{k-1}\ldots d_2 d_1 d_0$, where string $X$ ends in a digit not exceeding $b$, and $b \leq d_i < a+b$ for any $i$. If $d_0 < a+b-1$, then we add one chip to the origin and no firing happens. 
If $d_0 = a+b-1$, and we add one chip to the origin, the origin fires:
\[Xd_kd_{k-1}\ldots d_2 (d_1+a) a.\]
Then the digit next to the origin fires, causing the origin to fire again:
\[Xd_kd_{k-1}\ldots (d_2+a) (d_1-b) (a+b) \quad Xd_kd_{k-1}\ldots (d_2+a) (d_1-b+a) a.\]
Now the hundreds' digit fires:
\[Xd_kd_{k-1}\ldots (d_3+a) (d_2-b)(d_1+a) a,\]
causing the tens' digit and the origin to fire:
\begin{multline*}
Xd_kd_{k-1}\ldots (d_3+a) (d_2-b+a)(d_1-b) (a+b) \\
 Xd_kd_{k-1}\ldots (d_3+a) (d_2-b+a)(d_1-b+a) a,
\end{multline*}
Then the thousands' digit fires, causing all the consecutive digits to fire ending in:
\[Xd_kd_{k-1}\ldots (d_4+a) (d_3-b+a)(d_2-b+a)(d_1-b+a) a.\]
After we reach $d_k$ and it fires, it causes all the digits to the right to fire ending in:
\[(X+a)(d_k-b+a)(d_{k-1}-b+a)\ldots (d_3-b+a)(d_2-b+a)(d_1-b+a) a.\]
The final result is exactly the same as adding a chip at the origin in the elevated $a$-$b$ chip-firing game.
\end{proof}

Now we are ready for our theorem about the left part.

\begin{theorem}
There exists $N > B$, so that for any $n > N$, the final state $\phi(n)^L = s_{-k}\ldots s_{-2}s_{-1}s_0$ has all the digits greater or equal to $a$. In addition, the transition from $\phi(n)^L$ to $\phi(n+1)^L$ is the same as adding one chip at the origin in the elevated $a$-$b$ chip-firing game.
\end{theorem}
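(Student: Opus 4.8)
The plan is to reduce everything to a clean property of the elevated game and then transport it to the real game through Lemma~\ref{lemma:elevated}. First I would isolate the following fact about the elevated game: once a vertex becomes nonzero it never again drops below $a$. Indeed, a vertex other than the origin receives chips only in blocks of $a$ (one block each time its right neighbour fires), and an elevated firing replaces a value $v \in [a+b, 2a+b)$ by $v-b \in [a, 2a)$; since $a \le b$ gives $2a \le a+b$, a non-origin vertex that has received at least one block stays in $[a, a+b)$ forever, and the origin, once it has fired, cycles through $[a, a+b)$ under the external increments. Consequently, for every sufficiently large value the stable elevated configuration has all of its digits at least $a$, and the elevated ``add one chip'' map sends an all-$\ge a$ configuration to another such configuration.

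Next I would transport this to $\phi(n)^L$. Fix a reference $n_0 > B$ and let $i$ be the least index with $s_{-i} < a$ in $\phi(n_0)^L$. By Lemma~\ref{lemma:elevated}, for every $n > n_0$ the transition $\phi(n)^L \to \phi(n+1)^L$ restricted to the positions $[1-i, 0]$ is exactly the elevated map, so by the first step the digits at positions $0, \dots, -(i-1)$ never fall below $a$ again. Writing $r(n)$ for the length of the maximal run of digits $\ge a$ starting at the origin, this shows that $r(n)$ is non-decreasing for $n \ge n_0$, and moreover that every fixed position, once it enters the run, is permanently $\ge a$. It then remains to prove that $r(n)$ eventually equals the total number of digits.

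The main point — and the step I expect to be the real obstacle — is showing that this stable run grows to fill the whole left part. The leverage I would use is a firing-mode dichotomy at the boundary vertex $-r(n)$: it sits just outside the elevated region, into which the firings of its right neighbour $-(r(n)-1)$ keep injecting blocks of $a$. If this boundary vertex ever fired in the full $a$-$b$ mode it would send $b$ chips rightward into the elevated region, contradicting the purely elevated transition guaranteed there by Lemma~\ref{lemma:elevated}; hence it cannot fire in full mode. But its value is bounded by $a+b$, while as $n \to \infty$ the identity $S_{\phi(n)^L}(b/a) = n - ac$ of Theorem~\ref{thm:stable} forces every fixed vertex to fire unboundedly often (the growing value must be pushed ever further left through digits that stay below $a+b$). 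A vertex that receives unboundedly many blocks, is never allowed to fire in full mode, and yet stays bounded is impossible; this rules out the run stabilising at any finite length. Combining this with the fact that each newly created leading digit is born at value $a$, the finitely many ``bad'' digits trapped between the growing run and the leading digit are squeezed out, and I would let $N+1$ denote the first index at which all digits are $\ge a$. Making the squeezing step fully rigorous — controlling the interior bad digits rather than merely the moving boundary — is where the genuine content lies.

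Finally, once $\phi(N+1)^L$ has all digits $\ge a$, its least index with $s_{-i} < a$ is $i = k+1$, so $[1-i, 0]$ is the entire left part; applying Lemma~\ref{lemma:elevated} there shows that for all $n > N$ every transition is globally the elevated ``add one chip'' operation, which by the first step preserves the property that all digits are $\ge a$. This delivers both assertions of the theorem simultaneously for all $n > N$.
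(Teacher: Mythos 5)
Your overall skeleton is the same as the paper's: show the origin digit stays at least $a$ for $n > B$, use Lemma~\ref{lemma:elevated} to obtain a run of digits $\geq a$ growing leftward from the origin, argue that this run must keep extending, and finish by applying Lemma~\ref{lemma:elevated} to the entire left part once the run fills it. Your idea of invoking $S_{\phi(n)^L}(\frac{b}{a}) = n - ac$ from Theorem~\ref{thm:stable} to force activity arbitrarily far to the left is a legitimate (and welcome) way to justify the step the paper merely asserts, namely that the vertex just inside the boundary ``has to fire.''

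However, the centerpiece of your third step, the ``firing-mode dichotomy,'' is wrong. In the actual game there is only one mode of firing: every firing sends $a$ chips left and $b$ chips right. ``Elevated'' is not a mode in which an individual vertex fires; it describes the \emph{net} effect of a whole transition $\phi(n)^L \to \phi(n+1)^L$. When a former boundary vertex, after being absorbed into the run, accumulates $a+b$ chips and fires, it genuinely sends $b$ chips rightward into the run, and those chips are soaked up by compensating extra firings of the run vertices, so the net restricted transition is still elevated; this is exactly what the cascade in the proof of Lemma~\ref{lemma:elevated} shows (the origin fires $k+1$ times, vertex $-1$ fires $k$ times, etc.). So no contradiction with Lemma~\ref{lemma:elevated} arises and the inference ``hence it cannot fire in full mode'' is invalid. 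Your own text exposes the problem: you also assert, correctly, that every fixed vertex must fire unboundedly often, which flatly contradicts the dichotomy. There is a second confusion: under your stabilization hypothesis the boundary vertex receives \emph{no} blocks at all, since a single block of $a$ would lift its digit to $\geq a$ and extend the run; so the object of your final contradiction (a vertex receiving unboundedly many blocks yet never firing) never exists. The contradiction you actually need is simpler and never mentions the boundary firing: if the run froze at length $r$, its leftmost member $-(r-1)$ would never fire again, no chips could ever cross to positions $\leq -r$, hence those digits would be frozen and $S_{\phi(n)^L}(\frac{b}{a})$ would be bounded, contradicting Theorem~\ref{thm:stable}. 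Finally, the piece you explicitly defer (``squeezing out'' the interior bad digits) is where the paper's induction does its work: absorption proceeds one vertex at a time from the right, and a new leading digit can only be created, with value exactly $a$, after a cascade traverses every existing digit, so no new bad digits ever appear. As written, your argument rests on a false step and postpones the remaining content, so it does not yet constitute a proof.
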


\begin{proof}
We know that for $n > B$ each time the origin fires, the origout fires too. That means the origin is at least $a$. 

Consider $n_0 > B$, where $\phi(n_0)^L = s_{-k}\ldots s_{-2}s_{-1}s_0$. Suppose $i$ is the smallest integer such that $s_{-i} <a$. Then from Lemma~\ref{lemma:elevated} for any $n > n_0$ the last $i$ digits of $\phi(n)^L$ are at least $a$. For some $n_1 \geq n_0$, the vertex $1-i$ has to fire, making the last $i+1$ digits of $\phi(n_1)^L$ at least $a$, and not changing the total number of digits. Thus, for any $n > n_1$, the last $i+1$ digits of $\phi(n)^L$ are at least $a$. We proceed until all the digits of $\phi(n_2)^L$ for some number $n_2$ are at least $a$. Also, if a new digit appears to the left it is $a$. By Lemma~\ref{lemma:elevated} all the digits stay at least $a$ for any $n > n_2$.

This means, for $n > n_2$, the transition from $\phi(n)^L$ to $\phi(n+1)^L$ is the same as adding a chip to the origin in the elevated $a$-$b$ chip-firing game.
\end{proof}

We denote by $H \geq B$ the smallest integer $n$ starting from which all the digits in the left part are at least $a$ and the chip-firing on the left is the same as adding a chip to the origin in the elevated $a$-$b$ chip-firing game.

\section{The Right Part}\label{sec:rightside}

We know that if $\phi(n)^R = \xi_k$, then $\phi(n+1)^R = \xi_{k+i}$, where $i \geq 0$. The following theorem described the value of $i$ for large $n$.

\begin{theorem}
For $n > H$, if $\phi(n)^R = \xi_k$ and $i$ is the number of digits in the largest suffix of $\phi(n)^L$ that contain digits that are greater or equal to $b$, then $\phi(n+1)^R = \xi_{k+i}$.
\end{theorem}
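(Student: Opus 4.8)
The plan is to recast this statement about settlement indices as a count of origin firings. Recall from Section~\ref{sec:originfiring} and Section~\ref{sec:settlements} that the right part is fed only by the origin and that, by Lemma~\ref{lemma:transition}, one firing of the origin (followed by settling) advances the settlement index by exactly one. Since the final right part is the unique settled configuration obtained after the origin has fired $f_0$ times (where $f_0$ is well defined by Corollary~\ref{cor:uniequelydefined}), we have $\phi(n)^R = \xi_{f_0}$ and $\phi(n+1)^R = \xi_{f_0'}$, with $f_0$ and $f_0'$ the numbers of origin firings reaching $\phi(n)$ and $\phi(n+1)$. Writing $k = f_0$, the claim $\phi(n+1)^R = \xi_{k+i}$ is therefore equivalent to $f_0' - f_0 = i$; that is, adding one chip to the origin of $\phi(n)$ and re-settling must make the origin fire exactly $i$ more times.

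First I would count these extra origin firings using the elevated game. For $n > H \ge B$ the left part evolves exactly as in the elevated $a$-$b$ game (Lemma~\ref{lemma:elevated} and the theorem of Section~\ref{sec:leftside}), and all of its digits lie in $[a,a+b-1]$. I would follow the cascade displayed in the proof of Lemma~\ref{lemma:elevated}: adding a chip to the origin starts a chain of firings that travels leftward, and each successive left digit that fires pushes $b$ chips back toward the origin, which (since $n \ge B$ keeps the right part non-dormant, so the origout always fires back, by Lemma~\ref{lemma:firingdifference}) causes the origin to fire once more. Thus the origin fires exactly once for every digit that ``carries'' in the elevated sense, and $f_0' - f_0$ equals the number of carries in the elevated transition.

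It remains to identify the number of carries with $i$. In the elevated game the origin carries only when its digit is $a+b-1$, so that adding one chip brings it to $a+b$; when it does, the carry reaches the neighbor $s_{-1}$ precisely when $s_{-1}+a \ge a+b$, i.e.\ when $s_{-1}\ge b$, and then propagates leftward through each consecutive digit that is at least $b$, halting at the first digit below $b$ (which absorbs the incoming $a$ without firing). Hence the carrying positions are exactly the digits of the maximal suffix of $\phi(n)^L$ all of whose entries are $\ge b$, and there are $i$ of them. This yields $f_0' - f_0 = i$ and therefore $\phi(n+1)^R = \xi_{k+i}$.

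I expect the middle step to be the main obstacle: turning the informal cascade in the proof of Lemma~\ref{lemma:elevated} into a clean induction showing that each elevated carry corresponds to exactly one real origin firing. The natural approach is induction on the length of the carry chain, tracking the origin's chip count as it is alternately emptied by its own firing and refilled by the origout firing back. A related delicate point is the activation of the suffix: the count $i$ coincides with the number of carries exactly when the last digit equals $a+b-1$, so that the origin actually fires; it is in this activated situation — the one the theorem isolates, as in the $\phi(3k+2)$ transitions of Section~\ref{sec:23firing} — that the suffix of digits $\ge b$ is the true measure of how far the settlement advances.
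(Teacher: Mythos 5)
Your proposal is correct and follows essentially the same route as the paper: the paper's one-line proof likewise reduces the claim to counting how many times the origin fires in the transition from $\phi(n)$ to $\phi(n+1)$ and cites the cascade in the proof of Lemma~\ref{lemma:elevated}; your three steps (settlement index equals the origin-firing count $f_0$, the cascade makes the origin fire once per elevated carry, carries match the suffix of digits $\geq b$) are an expanded version of exactly that argument.

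One substantive remark: the ``delicate point'' you flag at the end is genuine, and it is a defect of the theorem's statement (and of the paper's proof) rather than of your argument. The identification of $i$ with the number of origin firings is valid only in your ``activated'' case, when the origin digit equals $a+b-1$, or trivially when $i=0$. For $a \geq 2$ there are infinitely many $n > H$ whose origin digit lies in $[b,\,a+b-2]$: then $i \geq 1$ yet the origin never fires and the right part does not move. Concretely, in the $2$-$3$ game one has $\phi(16) = 233.413$ and $\phi(17) = 234.413$; here the largest suffix of $233$ with digits $\geq 3$ is $33$, so $i = 2$, but $\phi(17)^R = \phi(16)^R$, not $\xi_{k+2}$. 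The paper's proof asserts ``the number of times the origin fires in this transition is exactly $i$,'' which silently assumes activation (note that Section~\ref{sec:23firing} is careful to apply the suffix count only to the $\phi(3k+2) \to \phi(3k+3)$ transitions, where the last digit is $4 = a+b-1$; and for $a=1$, as in Section~\ref{sec:1bfiring}, digits $\geq b$ coincide with digits equal to $a+b-1$, so no discrepancy arises there). Your explicit hypothesis that the last digit of $\phi(n)^L$ equals $a+b-1$ is precisely the correction the statement needs.
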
 

\begin{proof}
Consider the transition from $\phi(n)^R$ to $\phi(n+1)^R$. As we saw in the proof of Lemma~\ref{lemma:elevated} the number of times the origin fires in this transition is exactly $i$.
\end{proof}

\section{$1$-$b$ Firing Example}\label{sec:1bfiring}

The formulae are particularly simple for 1-$b$ firing for $b > 1$. This case corresponds to the first line in the Tables~\ref{table:origout} and~\ref{table:valuerightside}, where $c=1$. 

The repeated digit to the right of the radix is $b-1$, and the eventual value of the right part evaluated in base $b$ is $1$. Correspondingly, for $n > b$, the left part of 1-$b$ chip-firing interpreted in base $b$ is equal $n-1$. The value of the right part stabilizes after the first origin firing, that is, after $n = 1+b$. 

We have the explicit formula for settlements:
\[\xi_k = (b-1)_{k-1}b.\]

The number of digits $b-1$ in $\phi(n)^R$ for $n > b$ is 
\[\sum_{i=1}^{n-b-2}\nu_b(i),\]
where $\nu_b(x)$ the $b$-adic valuation of $x$.  That is, $\nu_b(x)=k$, where $k$ is the largest power of $b$ that divides $x$.

Now we wish to describe the left part.
Consider a sequence $R(b)$ that depends on an integer parameter $b$. Each element of $R(b)$ is a finite integer string consisting of digits 1 through $b$. The sequence is arranged in an increasing order by value if evaluated in any base greater than 2. In particular, $R(b)_1=1$, $R(b)_b = b$, and $R(b)_{b+1} = 11$. Here is, for example, sequence $R(2)$:
\[1,\ 2,\ 11,\ 12,\ 21,\ 22,\ 111,\ 112, \ldots.\]

We need this sequence to help us describe how the left part of the final state of 1-$b$ chip-firing changes from one number to the next.

Consider the final state for $n$ equaling $b$, $b+1$ and $b+2$ in 1-$b$ firing:
\[\phi(b) = b \quad \phi(b+1) = 10.b  \quad \phi(b+2) = 11.b.\]
We see that $\phi(b+2)^L = R(b)$. After that the left part before the radix follows sequence $R(b)$, that is for $n > b+1$, we have $\phi(n)^L = R(n-2)$.

The left part is particularly easy to describe for $b=2$. That is, for 1-2 firing. Here is a description of the left part or the  final state for $n > 3$.

To get the left part we represent $n$ in binary, remove the first digit, and increase all the other digits up by 1. For example, 21 in binary is 10101. Remove the first digit to get 0101, then shift by 1 to get 1212.

\section{Acknowledgements}

We are grateful to Prof.~James Propp for suggesting this project to us. We also want to thank PRIMES STEP for giving us the opportunity to do this research. We are grateful to the anonymous reviewers for useful comments.

\end{document}